\newtheorem{lemma}{Lemma}[section]
\newtheorem{theorem}{Theorem}[section]
\newtheorem{corollary}{Corollary}[section]
\theoremstyle{definition}
\newtheorem{definition}{Definition}[section]
\theoremstyle{remark}
\newtheorem{remark}{Remark}[section]
\theoremstyle{remark}
\newtheorem{example}{Example}[section]
\numberwithin{equation}{section}
\newcommand{\D}{\mathbb{D}}
\newcommand{\mc}[1]{\mathcal{#1}}
\newcommand{\bb}[1]{\mathbb{#1}}
\newcommand{\mf}[1]{\mathfrak{#1}}
\newcommand{\g}{\Gamma}
\newcommand{\hol}{\text{Hol}}
\renewcommand{\hom}{\text{Hom}}
\newcommand{\crit}[1]{\text{Crit}{#1}}
\newcommand{\cl}[1]{\text{Cliff}(\mathbb{P}^{#1})}
\newcommand{\T}{\mathbb{T}}
\newcommand{\p}{\mathbb{P}}
\newcommand{\0}{\mathcal{O}}
\newcommand{\I}{\text{Ind}}
\newcommand{\e}{\text{Edge}}
\renewcommand{\v}{\text{Vert}}
\renewcommand{\t}[1]{\text{#1}}
\newcommand{\lag}{\text{Lag}}
\newcommand{\ham}{\text{Ham}}
\author{Douglas Schultz}
\title{Transversality via gauge transformations for pseudo holomorphic disks in projectivized vector bundles}
\begin{document}

\begin{abstract}
We show that one can achieve transversality for lifts of holomorphic disks to a projectivized vector bundle by locally enlarging the structure group and considering the action of gauge transformations on the almost complex structure, which eases computation considerably. As an application we prove a special case of the Arnold-Givental conjecture.
\end{abstract}
\maketitle
\setcounter{tocdepth}{2}
\tableofcontents
\section{Introduction}
This paper is part of a program of computing Lagrangian Floer theory in symplectic fibrations; namely, one has a compact fiber bundle of symplectic manifolds
$$F\rightarrow E\rightarrow B$$
for which the the symplectic form on the total space restricts to the fiber form and is augmented by a pullback of the base form in the horizontal direction. One can imagine having a fibration of Lagrangians
$$L_F\rightarrow L\rightarrow L_B$$
and wanting to say something about that Lagrangian Floer theory of $L$ given that of $L_F$ and $L_B$. The author's previous work in \cite{fiberedpotential} computed the number of low energy Maslov two disks through a generic point in $L$ when the symplectic fibration was trivial above $L_B$ and $L\cong L_B\times L_F$ (and under some additional monotonicity resp. rationality assumptions on $(F,L_F)$ resp. $(B,L_B)$). The goal of this paper is to extend this result to certain classes of fibrations in two ways: The first of which is by including Lagrangians that are not products in the given symplectic fibration, and the second is by including the high energy disks in our count. We give a basic example:
\begin{example}[Example \ref{realprojectivebundles}]

On the fiber bundle $\mc{O}\oplus \mc{O}_k\xrightarrow{\pi} \bb{CP}^n$ we choose a Hermitian metric and a connection whose parallel transport has values in $U(2)$ of the fibers. We get an induced connection and fiberwise symplectic form on $\bb{P}(\mc{O}\oplus \mc{O}_k)$ to which we can apply Guillemin-Lehrman-Sternberg's \cite{guillemin} construction to obtain a closed two form $a$ that restricts to the fiber form. The symplectic structure on $\bb{P}(\mc{O}\oplus \mc{O}_k)$ is then given by $\epsilon a +\pi^*\omega_{\bb{CP}^n}$ where the $\epsilon$ prevents the symplectic form from degenerating in the horizontal direction (a \emph{weak coupling form}).

In general, there are families of almost complex structures on the total space that fiberwise agree with a chosen one and make the projection holomorphic. From the definition of the line bundles considered, there is an anti-holomorphic involution $\tau$ on $\bb{P}(\mc{O}\oplus \mc{O}_k)$ that lifts the the involution on $\bb{CP}^n$ (see the full example \ref{realprojectivebundles}), from which we get a fibered Lagrangian
$$\bb{RP}^1\rightarrow L\rightarrow \bb{RP}^n$$
Let $u:(D,\partial D)\rightarrow (\bb{CP}^n,\bb{RP}^n)$ be a holomorphic disk for the integrable toric $J_{\bb{CP}^n}$. The pullback bundle is symplectomorphic to a pullback of the Hirzebruch surface under the hemisphere map. $L$ must be invariant under parallel transport [Lemma \ref{fiberedlagrangianlemma}], so $u\vert_{\partial D}^*L$ is of the form
\begin{equation}
L:=\left\lbrace \Big( [x_0,e^{ki\theta/2}x_1],\theta \Big): x_i\in \bb{R}\right\rbrace \subset \bb{CP}^1\times S^1
\end{equation}
If we choose the $J$ on the total space to agree with the standard complex structure on the fibers and to preserve the Hirzebruch connection (see Lemma \ref{uniqueacs}), some obvious holomorphic lifts of $u$ look like
\begin{gather}
\hat{v}_{[0,1]}(z)=[0,1]\label{constantsectiontopintro}\\
\hat{v}_{[1,0]}(z)=[1,0]\label{constantsectionbotintro}\\
\hat{v}_{[x_0,x_1]}(z)=[x_0,z^{k/2}\cdot x_1] \qquad x_0\neq 0,\, k\text{ even} \label{non-constsectionintro}
\end{gather}
We would like to include some (perturbation) of these disks in the count of Maslov index 2 configurations evaluating to a generic point.

\end{example}

In order to say anything about the count of Maslov 2 configurations given the above information, one needs some sort of transversality. The main result in this paper is that for pullback bundles we can achieve transversality by enlarging the structure group. Namely, the projectivization of a vector bundle with Hermitian connection has a structure group which includes into $U(n)$ and descends to an inclusion into $PU(n)$ on the fiberwise projectivization. Take an almost complex structure $J$ on the total space to agree with fiberwise integrable $PU(n)$-invariant structure. Consider the action of the $PU(n)$-valued gauge transformations on the symplectic connection/almost complex structure on the pullback bundle:

\begin{theorem}[Theorem \ref{liftthm}]
Let $A\in H_2( u^*E,u^*L, \bb{Z})$ be a relative section class and $p\in L$ a generic point. There is a comeager set of gauge transformations $\mf{G}^{reg}\subset \mf{G}$ such that the moduli space
$$\mc{M}_{j,\mc{G}^*J}(u^*E,u^*L,A,p)$$
of $\mc{G}^*J$-holomorphic sections for $\mc{G}\in \mf{G}^{reg}$ in the class $A$ is a smooth manifold of expected dimension
$$\dim \mc{M}_{j,\mc{G}^*J}(u^*E,u^*L,A,p)=\I (A)$$
\end{theorem}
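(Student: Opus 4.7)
The plan is to follow the classical Floer--Hofer--Salamon paradigm for generic transversality: assemble a universal moduli space fibered over the gauge group $\mf{G}$, show that the universal linearization is surjective, and then invoke Sard--Smale to extract a comeager set of regular $\mc{G}$.

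First I would fix Sobolev (or $C^\ell$) completions of both $\mf{G}$ and the space $\mc{B}$ of sections in class $A$ that satisfy $v(\star)=p$ for a fixed marked boundary point $\star\in\partial D$, and assemble
\[
\mc{M}^{univ}(A,p) \;=\; \{(v,\mc{G})\in \mc{B}\times \mf{G} : \bar{\partial}_{\mc{G}^*J}v=0\}.
\]
This is the zero locus of a smooth section of a Banach bundle whose fiber at $(v,\mc{G})$ is a Sobolev completion of $(0,1)$-forms with values in the vertical tangent bundle $v^*T^{vert}$. The linearization splits as $(\xi,\eta)\mapsto D_v\xi + \Phi_v(\eta)$, where $D_v$ is the linearized Cauchy--Riemann operator on vertical vector fields with totally-real boundary conditions coming from $u^*L$, and $\Phi_v(\eta)$ is the infinitesimal change in $\bar{\partial}_{\mc{G}^*J}$ generated by an infinitesimal gauge transformation $\eta$. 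Standard Riemann--Roch for disks with Lagrangian boundary yields $\text{ind}(D_v)=\I(A)$ once the point constraint has been absorbed into the domain $\mc{B}$.

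The heart of the argument will be showing that this combined linearization is surjective at every zero. If not, there exists a nonzero $\zeta$ in the $L^2$-cokernel, smooth in the interior by elliptic regularity, satisfying the formal adjoint equation $D_v^*\zeta=0$ and pairing to zero against every $\Phi_v(\eta)$. Unique continuation for the adjoint operator forces $\zeta\neq 0$ on a dense open subset of $D$; I would pick such a point $z_0$ in the interior, away from $\star$. Because the enlarged $PU(n)$-valued gauge group acts fiberwise on each projective fiber $\bb{P}^{n-1}$, and the isotropy $U(n-1)$ of $v(z_0)$ still acts transitively on the real tangent directions of $T_{v(z_0)}\bb{P}^{n-1}$, one can use a cutoff near $z_0$ to produce an $\eta$ whose image $\Phi_v(\eta)$ at $z_0$ is an arbitrary prescribed element of $(T^{vert}_{v(z_0)})^{0,1}$, in particular one that pairs nontrivially with $\zeta(z_0)$, contradicting $\zeta\in\mathrm{coker}$.

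The principal obstacle I expect is precisely this pointwise surjectivity step: one must verify that the composition of the infinitesimal gauge action (a map from $\mf{pu}(n)$-valued functions on $D$ into fiberwise endomorphisms of $(T^{vert},J)$), post-composed with $dv\circ j$, hits a dense subspace of the cokernel of $D_v$. This is where the enlargement of the structure group is essential: variations through ordinary symplectic-connection gauge transformations only tilt the horizontal distribution and would not supply enough cokernel directions, whereas $PU(n)$-gauge transformations genuinely perturb the fiberwise complex structure through a finite-dimensional family rich enough to cover every vertical direction. Once this is established, $\mc{M}^{univ}(A,p)$ is a smooth Banach submanifold, the projection $\pi:\mc{M}^{univ}(A,p)\to\mf{G}$ is Fredholm of index $\I(A)$, and the Sard--Smale theorem supplies a comeager set $\mf{G}^{reg}$ of regular values whose fibers are smooth manifolds of the stated dimension. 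A standard Taubes-style intersection-of-open-sets argument then upgrades the conclusion from the $C^\ell$ topology to the $C^\infty$ topology on $\mf{G}$.
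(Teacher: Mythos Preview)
Your proposal is correct and follows essentially the same strategy as the paper: set up the universal moduli space over $\mf{G}^l$, show surjectivity of the combined linearization via a Hahn--Banach cokernel argument using that $\mf{pu}_{m+1}\to T_pF$ is onto (this is exactly the paper's key step), then apply Sard--Smale and the Taubes argument to pass to $C^\infty$. One small correction to your commentary: the $PU(n)$-valued gauge transformations do \emph{not} perturb the fiberwise complex structure $J_I$, which remains fixed (see Lemma~\ref{hologaugelemma}); what they perturb is the connection, and the relevant linearized variation is the off-diagonal term $J_{\mf{h}}\circ dv\circ j$ in the trivial splitting---your surjectivity argument still goes through because $dv$ is nowhere zero on a section and $\mf{pu}_{m+1}\to TF$ is surjective, exactly as the paper argues.
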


A basic computation [Lemma \ref{hologaugelemma}] shows that such gauge transformation preserve holomorphic sections. Thus, one can use the sections \ref{constantsectiontopintro}\ref{constantsectionbotintro}\ref{non-constsectionintro} to count the Maslov index 2 configurations through a generic point.

As an application, we show a special case of the Arnold-Givental conjecture.
\begin{theorem}
Let $\bb{P}(\mc{V})\rightarrow \bb{CP}^n$ be the projectivization of a vector bundle with a lift $\tau$ of the anti-symplectic involution on $\bb{CP}^n$. Then for $L=\text{Fix}(\tau)$ and any Hamiltonian isotopy $\phi^t:E\times \bb{R}\rightarrow E$, we have
$$\#\phi^1(L)\cap L\geq \sum_i \dim H^i(L,\bb{Z}_2)$$
\end{theorem}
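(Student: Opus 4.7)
The plan is to construct the self-Floer cohomology $HF^*(L,L;\bb{Z}_2)$, prove the isomorphism $HF^*(L,L;\bb{Z}_2)\cong H^*(L;\bb{Z}_2)$, and then invoke Hamiltonian invariance: once this is in hand, a small perturbation making $\phi^1(L)\pitchfork L$ ensures that the rank of the Floer complex bounds the dimension of its cohomology, yielding the stated inequality.

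I would first choose the almost complex structure $J$ on $E$ to be $PU(k)$-invariant on fibers, horizontal with respect to the unitary connection, and to satisfy $\tau^*J=-J$; this is possible because the fiberwise complex structure on $\bb{CP}^{k-1}$ can be arranged to be antiholomorphic under the fiber involution underlying $\tau$. A direct computation then shows that $u\mapsto \tau\circ u\circ c$, where $c(z)=\bar z$, defines an involution on the moduli space of $J$-holomorphic disks bounded by $L$. Restricting the gauge group $\mf{G}$ in Theorem \ref{liftthm} to its $\tau$-equivariant subgroup, the same genericity argument still yields a comeager set of regular gauges for which the moduli spaces are cut transversally and carry this involution.

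The critical step is to show that the disk obstruction $\mf{m}_0(L)$ vanishes over $\bb{Z}_2$. For a generic point $p\in L$, the Maslov-$2$ moduli space through $p$ is a finite, smoothly cut-out set by Theorem \ref{liftthm}, and inherits the $\tau\circ c$-involution. A dimension count on the $\tau$-equivariant moduli (whose virtual dimension is roughly half that of the full complex moduli) shows that the involution has no fixed points for generic $p$, so the count of Maslov-$2$ disks through $p$ is even, giving $\mf{m}_0(L)=0$ in $\bb{Z}_2$. Consequently the Floer differential squares to zero and $HF^*(L,L;\bb{Z}_2)$ is well-defined. The isomorphism $HF^*\cong H^*(L;\bb{Z}_2)$ then follows from the Oh / Biran--Cornea spectral sequence, whose higher differentials vanish by the same $\tau\circ c$-pairing applied inductively to pearl moduli of higher Maslov index, using the explicit enumeration of lifts described in Example \ref{realprojectivebundles} together with the gauge-preservation property of Lemma \ref{hologaugelemma}.

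I expect the main obstacle to be tracking the $\tau\circ c$-symmetry through the full compactified moduli with boundary insertions and bubbles, ensuring that the involution acts freely (or pairs contributions) on every stratum so that all higher $A_\infty$-obstructions and differentials cancel modulo $2$. Secondarily, the gauge-transformation perturbation scheme of Theorem \ref{liftthm}, which is defined on pullback bundles along a single disk, must be extended coherently across disk-bubble and sphere-bubble strata; this likely requires an inductive energy argument leveraging the fibered structure of $\bb{P}(\mc{V})\to \bb{CP}^n$ and the rational cutoff provided by the weak coupling form.
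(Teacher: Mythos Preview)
Your proposal follows essentially the same route as the paper: use the involution $u\mapsto \tau\circ u\circ\bar{\,\cdot\,}$ on moduli of holomorphic disks to force all counts to vanish modulo $2$, conclude that $HF(L,\Lambda_{\bb{Z}_2})\cong H^*(L,\bb{Z}_2)$, and then invoke the standard lower bound on intersection points. The paper phrases the degeneration directly (``any holomorphic disk in the Floer differential cancels mod $2$'') rather than through the Oh/Biran--Cornea spectral sequence, but this is cosmetic.

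The one substantive difference is how you reconcile transversality with the $\tau$-symmetry. You restrict to the $\tau$-equivariant subgroup of $\mf{G}$ and assert that the genericity argument of Theorem \ref{liftthm} still goes through. This is plausible but not immediate: the surjectivity step in that proof chooses $\mf{h}\in\mf{pu}_{m+1}$ freely at a single interior point, and under the equivariance constraint the perturbation at $z$ is tied to that at $\bar z$, so one must check separately that the paired contributions do not cancel against a given cokernel element. The paper sidesteps this entirely. It does \emph{not} ask the gauge to be $\tau$-equivariant; instead it invokes Lemma \ref{hologaugelemma} and Remark \ref{computeremark} to say that any gauge transformation induces a bijection between $J_H$-holomorphic sections and $\mc{G}^*J_H$-holomorphic sections, so one may count directly with the unperturbed $J_H$. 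That $J_H$ is $\tau$-anti-holomorphic then follows from the uniqueness clause of Lemma \ref{uniqueacs} applied to $\tau^*J_H$, since $\tau$ preserves $TF^{\perp a}$ and conjugates the fiber and base complex structures to their negatives. This is the intended payoff of the gauge machinery: one computes in the original connection and appeals to Theorem \ref{liftthm} only to certify that the count is well-defined.

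Your explicit attention to fixed points of $\tau\circ c$ via a half-dimension argument is a point the paper leaves implicit, and your identification of the coherence-across-strata issue is accurate; the paper discharges it through the treed-disk framework of Theorems \ref{transversalitythm} and \ref{compactnessthm} together with the invariance discussion in Section \ref{invariancesection}.
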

The open Gromov-Witten invariants of the fixed point set of an anti-symplectic involution in a rational symplectic manifold are discussed in Section 5.4 \cite{CW1}. However, we give a short proof of this fact to demonstrate the ease of computation due to Theorem \ref{liftthm}. We provide an explicit computation using the aforementioned special almost complex structures, and relate the Floer cohomology groups to those computed with a general divisoral perturbation system as in \cite{CW1,CW2}.
\subsection{Outline}
Section \ref{symplecticbackgroundsection} is devoted to the construction of a symplectic manifold from a vector bundle.
Section \ref{holomorphicsectionssection} contains the language and proof of Theorem \ref{liftthm}.
Section \ref{transection} provides the modifications to previous results of the author's for transversality and compactness of general configurations in fiber bundles.
Section \ref{fukayaalgebrasection} defines the Fukaya algebra of a Lagrangian and the obstruction to Floer cohomology
Section \ref{examplesection} Contains some examples and the proof of Theorem \ref{agthm}.

\section{Symplectic manifolds from projectivized vector bundles}\label{symplecticbackgroundsection}
Let $B$ be a rational symplectic manifold, and let
$$V\rightarrow \mc{V}\rightarrow (B,\omega_B)$$
be a rank $m+1$ complex vector bundle equipped with a Hermitian metric $h$ and a compatible fiberwise integrable complex structure. We assume that there is a connection on $V$ whose holonomy lies in $U(m+1)$ of each fiber. The imaginary part of $h$ defines a global alternating two-form which restricts to a symplectic form on each fiber. With respect to an $S^1$ action we can form the fiberwise reduction, denoted
$$\bb{CP}^n\rightarrow \bb{P}(\mc{V})\rightarrow B$$
with a fiberwise symplectic two-form $a_h$ induced from the reduction of $\mc{I}m (h)$. The connection induces a splitting of the tangent space 
$$T\bb{P}^m\oplus T\bb{P}^{m,\perp a_h}=:TF\oplus H_h$$
which prescribes $PU_{m+1}$-valued parallel transport. We will often denote the total space by $E$ and the fiber by $F$

To this connection we can associate a closed connnection form via the Guilleman-Lerman-Sternberg \cite{guillemin} minimal coupling construction: Necessarily we have the \emph{curvature identity} for a closed connection form
\begin{equation}\label{curvatureidentity}
d a(v^\sharp,w^\sharp)=\iota_{[v^\sharp,w^\sharp]^{vert}}a \qquad mod\, B
\end{equation}
[1.3 \cite{guillemin}] where $v^\sharp$ is the horizontal lift of a tangent vector from $B$ and $[\bullet^\sharp,\bullet^\sharp]^{vert}$ is the projection of the Lie bracket onto $TF$. We take the value of the \emph{minimal coupling form} on $v,w\in \Lambda^2 H$ to be the unique zero-average Hamiltonian associated to \ref{curvatureidentity}, and to otherwise agree with $\mc{I}m h$. Such a form is closed [Theorem 1.4.1 \cite{guillemin}] The total space of the symplectic fibration becomes a symplectic manifold if we take
$$\omega_{\epsilon,a}:=\epsilon a +\pi^*\omega_B$$
for $\epsilon <<1$. Usually one can start with a symplectic $E$ and make it into a symplectic fibration without changing the symplectic form substantially.

A Lagrangian in this context is the following:
\begin{definition}
A \emph{fibered Lagrangian} $L$ is a fiber sub-bundle of Lagrangians
$$L_F\rightarrow L\xrightarrow{\pi} L_B$$
with $L_F$ monotone and $L_B$ rational in the sense that the symplectic area of relative disk classes forms a discreet subset of $\bb{R}$.
\end{definition}

One has the following lemma for finding such Lagrangians:

\begin{lemma}{(Fibered Lagrangian Construction Lemma)}\cite{floerfibrations}\label{fiberedlagrangianlemma}
Let $L_F\rightarrow L\rightarrow L_B$ be a connected sub-bundle. Then $L$ is Lagrangian with respect to $\epsilon a+\pi^*\omega_B$ if and only if

\begin{enumerate}
\item\label{paralleltransinv} $L$ is invariant under parallel transport along $L_B$ and
\item\label{normalizing} there is a point $p\in L_B$ such that $a_p\vert_{TL_F\oplus H_L}=0$, where $H_L:=H_a\cap TL$ is the connection restricted to $L$.
\end{enumerate}

\end{lemma}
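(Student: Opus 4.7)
The plan is to decompose $TL$ using the connection-induced splitting $TE = TF \oplus H_a$ and check when the weak coupling form vanishes on each block.

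First I would observe that condition (\ref{paralleltransinv}) is precisely what makes $TL$ respect the splitting, in the sense that $TL = TL_F \oplus H_L$ with $H_L := H_a \cap TL$ projecting isomorphically onto $TL_B$ at each point. Indeed, invariance under parallel transport along $L_B$ is the integrated statement of the infinitesimal assertion that the horizontal lift $v^\sharp$ of any $v \in TL_B$ remains tangent to $L$ along $L$.

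Next I would break $\omega_{\epsilon,a}|_{TL\times TL}$ into three blocks. The vertical--vertical block reduces to $a|_{TL_F\times TL_F}$, which vanishes because $L_F$ is Lagrangian in the fiber. The vertical--horizontal block vanishes automatically from the very definition of $H_a$ as the $a$-orthogonal complement of $TF$. The horizontal--horizontal block splits as $\epsilon\, a(h_1,h_2) + \omega_B(\pi_* h_1, \pi_* h_2)$; the $\omega_B$ piece vanishes because $L_B$ is Lagrangian.

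The heart of the argument is therefore showing that the fiberwise function $q \mapsto a_q(h_1^\sharp, h_2^\sharp)$ vanishes on all of $L_F$ under conditions (\ref{paralleltransinv}) and (\ref{normalizing}). For this I would apply the curvature identity (\ref{curvatureidentity}): the differential of this function along the fiber is $\iota_{[h_1^\sharp, h_2^\sharp]^{vert}} a_F$. Condition (\ref{paralleltransinv}), applied to the commuting flows of $h_1^\sharp$ and $h_2^\sharp$, forces the vertical part of their Lie bracket to be tangent to $L_F$ along $L$; since $L_F$ is Lagrangian in $F$, the interior product $\iota_{[h_1^\sharp, h_2^\sharp]^{vert}} a_F$ vanishes on $TL_F$. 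Hence the function is locally constant along $L_F$ in each fiber, condition (\ref{normalizing}) pins this constant to zero at the basepoint $p$, and connectedness together with parallel transport invariance spreads the vanishing to every fiber.

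The converse direction is obtained by reading the same argument backwards: if $L$ is Lagrangian of the assumed dimensions, then $H_L$ must surject onto $TL_B$ (yielding (\ref{paralleltransinv})) and the horizontal--horizontal pairing must vanish, specializing at $p$ to (\ref{normalizing}). I expect the main technical obstacle to be the Frobenius-style identification of the vertical part of $[h_1^\sharp,h_2^\sharp]$ with the infinitesimal failure of commuting parallel transports to preserve $L$; this needs to be done carefully enough to recover the full vanishing of the curvature Hamiltonian on $TL_F$ from the single statement that $L$ is parallel-transport invariant.
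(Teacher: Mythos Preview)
The paper does not prove this lemma; it is quoted from \cite{floerfibrations}, so there is no in-paper argument to compare against. Evaluating your sketch on its own: the block decomposition is correct, as is the use of the curvature identity (\ref{curvatureidentity}) to show that $q\mapsto a_q(h_1^\sharp,h_2^\sharp)$ is locally constant along $L_F$ once $[h_1^\sharp,h_2^\sharp]^{vert}$ is tangent to $L_F$.

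The gap is the final ``spreading'' step. What you have actually established is that the horizontal--horizontal part of $a|_L$ descends to a closed $2$-form $\beta$ on $L_B$, and condition (\ref{normalizing}) says only that $\beta_p=0$ at a single point. A closed $2$-form on a connected manifold that vanishes at one point need not vanish identically, and parallel transport along $L_B$, while it carries $L_{F,p}$ to $L_{F,q}$, does not intertwine the curvature Hamiltonians at different base points, so it cannot by itself propagate the vanishing. Concretely, on a trivial bundle $T^4\times F$ with a Hamiltonian connection of the form $\sigma=g(s)\,H\,du$ (where $H$ is a zero-average Hamiltonian on $F$ whose restriction to $L_F$ is a nonzero constant) and $L_B=\{t=v=0\}\cong T^2$, both (\ref{paralleltransinv}) and (\ref{normalizing}) hold at any zero of $g'$ while $a|_L\not\equiv 0$. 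Your argument therefore needs either the stronger hypothesis that (\ref{normalizing}) holds at every $p\in L_B$, or some additional structural input that your outline does not invoke; you should consult \cite{floerfibrations} to see which of these is actually in play.
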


\section{Holomorphic sections of the pull-back} \label{holomorphicsectionssection}
We consider a $J_B$-holomorphic disk 
$$u:(D,\partial D)\rightarrow (B,L_B)$$
and review a strategy for trivializing $u^*E$ holomorphically. Then, we apply similar principles to achieve transversality for holomorphic sections.

To simplify notation, let $a:=u^*a_{h}$ be the pullback minimal coupling form. We get a symplectic connection on $u^*E$ via 
$$H:=TF^{\perp_{a}}$$
One can vary the connection as in [Chapter 8 \cite{ms2}] or \cite{salamonakveld}. Let
$$\sigma\in \Lambda^1(TD,C^\infty_0(u^*E))$$
be a one-form on $D$ with values in zero fiber-average smooth functions, and denote by $X_\sigma$ the associated fiberwise-Hamiltontian vector field valued one-form. It follows that $u\vert_{\partial D}^*L$ defines a Hamiltonian isotopy path of Lagrangians in the isotopy class of $L_F$. To see that this is actually a loop (c.f. \cite{salamonakveld}), we use the fact that $L$ is invariant under parallel transport [Lemma \ref{fiberedlagrangianlemma}].

Let
$$\mc{J}^{vert}(F):=C^\infty(D,\mc{J}(F,\omega_F))$$
Following \cite{gromov,ms2}, there is a canonical almost complex structure on the bundle $u^*E$:
\begin{lemma}[Lemma 8.2.8 \cite{ms2}]\label{uniqueacs}
For every $J_F\in \mc{J}^{vert}(F)$ and a connection $H$ as above, there is a unique almost complex structure $J_H$ on $u^*E$ so that
\begin{enumerate}
\item $J_H\vert_{TF}=J_F$
\item $\pi$ is holomorphic with respect to $(j,J_H)$, where $j$ is the standard integrable complex structure on $D$
\item $J_H$ preserves $H$.
\end{enumerate}
\end{lemma}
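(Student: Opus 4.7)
The plan is to construct $J_H$ pointwise using the splitting $T(u^*E)=TF\oplus H$ provided by the symplectic connection, observing that the three conditions essentially force the definition on each summand, so the content of the lemma is really existence (together with smoothness) and uniqueness simultaneously.

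First I would handle the vertical direction: condition (1) literally dictates that $J_H$ restricted to the vertical subbundle $TF$ must equal the prescribed $J_F$. Next, for the horizontal direction I would use that $d\pi$ restricts to a fiberwise linear isomorphism $d\pi|_H : H_e \to T_{\pi(e)}D$ at every point $e\in u^*E$, since $H$ is a connection complement to $\ker d\pi = TF$. Conditions (3) and (2) together say that $J_H$ preserves $H$ and that $d\pi\circ J_H = j\circ d\pi$; the unique endomorphism of $H_e$ satisfying both is
\begin{equation*}
J_H|_{H_e} \;:=\; (d\pi|_{H_e})^{-1}\circ j\circ d\pi|_{H_e}.
\end{equation*}
Declaring $J_H$ to be the direct sum of these two pieces under the splitting $TF\oplus H$ gives the unique candidate.

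I would then verify the two algebraic/analytic properties required to make this a genuine almost complex structure. The identity $J_H^2=-\mathrm{id}$ splits into the vertical claim $J_F^2=-\mathrm{id}$, which holds by hypothesis, and the horizontal claim, which is immediate from $j^2=-\mathrm{id}$ and the conjugation formula above. For smoothness, I would note that $H$ is a smooth subbundle (constructed from the smooth $a$-orthogonal complement, and only mildly modified by a smooth one-form $\sigma$), that $d\pi|_H$ and its inverse depend smoothly on the base point, and that $J_F\in\mc{J}^{vert}(F)$ is smooth by definition; hence the block-diagonal $J_H$ in the smooth splitting is smooth.

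There isn't really a hard step here — the lemma is essentially a direct bookkeeping exercise once one accepts the splitting — but the only point requiring a little care is ensuring that the formula for $J_H|_H$ is well-defined and smooth when the connection is deformed by the one-form $\sigma\in \Lambda^1(TD,C^\infty_0(u^*E))$ introduced above. Since the deformed horizontal distribution is still a smooth complement to $TF$, the same construction carries over verbatim, which is precisely what is needed later when we vary $H$ to try to achieve transversality.
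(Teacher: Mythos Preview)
Your argument is correct and is exactly the standard proof of this fact; the paper does not supply its own proof but simply cites \cite{ms2} and records the resulting matrix form in a trivialization, which is precisely your block-diagonal $J_H$ rewritten in coordinates where the horizontal distribution is $(-X_{\sigma(v)},v)$ rather than the coordinate horizontal.
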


In trivializing coordinates and the associated trivial connection, such a structure takes the form
\begin{equation}
J_\sigma:= 
\begin{bmatrix}
J_F & J_F\circ X_\sigma-X_\sigma\circ j \\
0 & j
\end{bmatrix}
\end{equation}
and the horizontal distribution is of the form
$$(-X_{\sigma(v)},v).$$

\subsection{Moduli of holomorphic sections} \label{verticalmaslovsection}
Let $N$ be the minimal Maslov number for $L_F$. Let $L\subset u^*E$ be shorthand for the sub-bundle $u\vert_{\partial D}^*L$. For any disk or sphere class $A\in H_2(u^*E,L,\bb{Z})$ we can assign a Maslov index $\mu_F(A)$ which is the Maslov index of the path of Lagrangian subspaces $T_{u(e^{i\theta})}L$ after choosing a trivialization of $\hat{u}^*TF$ for a map $\hat{u}:D\rightarrow (F,L)$ in the class $A$.

\begin{lemma}[Lemma 6.1 \cite{salamonakveld}]\label{vmilemma}
Let $\hat{u}_i:(D,\partial D)\rightarrow (u^* E, L)$ for $i=1,2$. Then
$$\mu_F(\hat{u}_1)\equiv \mu_F(\hat{u}_2) \; \t{mod}\, N$$
\end{lemma}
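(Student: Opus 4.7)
The plan is to reduce the difference $\mu_F(\hat u_1)-\mu_F(\hat u_2)$ to twice the first Chern number of a sphere in the fiber $F$, and then invoke monotonicity of $L_F$.

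First I would observe that both boundary maps $\hat u_i|_{\partial D}$ are sections of the bundle $L\to\partial D$, whose fiber $L_F$ is connected, so they are homotopic through sections. Such a homotopy produces an annulus $a\colon S^1\times[0,1]\to L$ with $a(\cdot,0)=\hat u_1|_{\partial D}$ and $a(\cdot,1)=\hat u_2|_{\partial D}$. Gluing $\hat u_1$, $a$, and the orientation-reverse of $\hat u_2$ along their common boundary circles yields a sphere
$$\hat v\colon S^2\to u^*E.$$

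Next I would appeal to the standard additivity of the Maslov index under such a decomposition. With compatible trivializations of $TF^{vert}$ over each of the three pieces, the cylinder $a$ contributes zero (its two Lagrangian boundary loops cancel), while the disks contribute $\mu_F(\hat u_1)$ and $-\mu_F(\hat u_2)$. On the other hand, the total count on the closed sphere is twice the Chern number of the pulled-back vertical tangent bundle, giving
$$\mu_F(\hat u_1)-\mu_F(\hat u_2)=2c_1^{vert}(\hat v),$$
where $c_1^{vert}$ denotes the first Chern class of $TF^{vert}\to u^*E$.

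To finish, I would use that $D$ is contractible, so $u^*E$ admits a trivialization $u^*E\cong D\times F$ under which $TF^{vert}\cong\pi_F^*TF$. The sphere $\hat v$ is then homotopic to a sphere $\tilde v\in\pi_2(F)$ with $c_1^{vert}(\hat v)=c_1(TF)(\tilde v)$. Monotonicity of $L_F$ then closes the argument: picking any disk $d$ bounding $L_F$, one has $\mu_F(d\#\tilde v)-\mu_F(d)=2c_1(TF)(\tilde v)$, and both Maslov indices on the left lie in $N\bb{Z}$ by the very definition of the minimal Maslov number, so the difference does too.

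I expect the main obstacle to be the bookkeeping in step two: carefully verifying the Maslov additivity across a cylinder whose two boundary curves are only homotopic (not equal) as sections of $L\to\partial D$, and doing so inside the fibered total space $u^*E\to D$ with its vertical tangent bundle rather than inside a fixed symplectic manifold with a single Lagrangian. This should come down to the familiar index-additivity for the linearized Cauchy-Riemann operator with Lagrangian boundary conditions, but with some care matching the three trivializations.
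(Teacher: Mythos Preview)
The paper does not prove this lemma; it is quoted from Akveld--Salamon. Your overall strategy---glue to a sphere and reduce to $2c_1\in N\bb Z$---is a natural one.

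The gap, however, is not in your step two but in step one. Connectedness of $L_F$ does \emph{not} imply that two sections of $L\to\partial D$ are homotopic through sections: homotopy classes of sections of a bundle over $S^1$ are controlled by $\pi_1(L_F)$ modulo the monodromy action, not by $\pi_0(L_F)$. Already in Example~\ref{realprojectivebundles} with $k$ even one has $L\cong T^2$, and sections can wind differently around the fiber circle, so the annulus in $L$ that your gluing requires need not exist. The repair is to insert a fiber disk before gluing. The two boundary loops differ in $\pi_1(L)$ by a class in the image of $\pi_1(L_F)$ (both project to the same generator of $\pi_1(\partial D)$), and this class lies in $\ker\big(\pi_1(L_F)\to\pi_1(F)\big)$ because each loop already bounds in $u^*E\simeq F$; hence it is $\partial d$ for some disk $d:(D,\partial D)\to(F,L_F)$. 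Attaching $d$ to $\hat u_1$ makes the boundaries homotopic in $L$, and your gluing now produces a sphere $\hat v$ with
\[
\mu_F(\hat u_1)-\mu_F(\hat u_2)=-\mu_F(d)+2c_1^{vert}(\hat v),
\]
both summands lying in $N\bb Z$: the first by the definition of $N$, the second by your own final paragraph.
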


\begin{definition}
The \emph{Maslov index of the pair} $(u^*E,L)$ is defined as $\mu_F(u^*E)$ mod $N$ from Lemma \ref{vmilemma}
\end{definition}

The say that the \emph{vertical Maslov index} for a lift $\hat{u}$ of $u$, equivalently for a section of $u^*E$, is $\mu_F(\hat{u})$.
$J$-holomorphic lifts can be shown to have the following transversality property:

\begin{theorem}[Theorem 5.1 \cite{salamonakveld}]\label{liftmodulithm} Let $A\in H^{section}_2(u^*E,L,\bb{Z})$ be the class of a section and $J_F\in \mc{J}^{vert}(F)$ be a section of fiberwise almost complex structures. If $\mc{M}(A,J_F,H)$ is the moduli of $J_H$-holomorphic sections in the class $A$, then there is a comeager subset of Hamiltonian connections $\mc{H}^{\t{reg}}$ so that for $H\in\mc{H}^{\t{reg}}$  we have that the moduli space is a smooth manifold with
\begin{equation}\label{liftdimensionformula}
\dim_{\bb{R}} \mc{M}(A,J_F,H)=\dim_\bb{R} L_F+\mu_F(A)
\end{equation}
\end{theorem}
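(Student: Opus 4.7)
The plan is to follow the standard Sard-Smale universal moduli space strategy, adapted to the setting where the parameter being varied is the Hamiltonian connection $H$ rather than an ambient almost complex structure. Concretely, I form the universal moduli space
$$\mc{M}^{univ}(A) := \{(\hat{u},H) \in \mc{B}^{k,p}(A) \times \mc{H} : \bar\partial_{J_H}\hat{u}=0\},$$
where $\mc{B}^{k,p}(A)$ is the Banach manifold of $W^{k,p}$ sections of $u^*E$ in class $A$ with boundary on $L$ (for $kp>2$) and $\mc{H}$ is a Banach completion of the space of Hamiltonian connections, locally parametrized by one-forms $\sigma \in \Lambda^1(TD, C^\infty_0(u^*E))$ as in the coordinate description of $J_\sigma$ above. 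I then verify that $\mc{M}^{univ}(A)$ is a Banach submanifold by showing surjectivity of the linearization
$$D_{(\hat{u},H)} \colon T_{\hat{u}}\mc{B}^{k,p}(A) \oplus T_H\mc{H} \longrightarrow L^p\bigl(\Omega^{0,1}(\hat{u}^*TF)\bigr),$$
whose restriction to the first factor is the Fredholm Cauchy-Riemann operator $D_{\hat{u}}$ for the section.

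The heart of the proof, and its main obstacle, is this surjectivity. Suppose $\eta \in L^q(\Omega^{0,1}(\hat{u}^*TF))$ is $L^2$-orthogonal to the image of $D_{(\hat{u},H)}$. Then $\eta$ lies in the kernel of the formal adjoint $D_{\hat{u}}^*$, so by elliptic regularity it is smooth and by Aronszajn unique continuation its zero locus has empty interior. Fix an interior point $z_0 \in D$ with $\eta(z_0) \neq 0$. From the coordinate expression for $J_\sigma$ displayed above, the linearization of $\bar\partial_{J_\sigma}\hat{u}$ in the $\sigma$-direction is given algebraically in terms of the Hamiltonian vector field of a variation $h$ of $\sigma$, evaluated along $\hat{u}$. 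I construct $h$ supported in a product neighborhood $B_\epsilon(z_0) \times U$ in $D \times u^*E$, where $U$ is a fiber neighborhood of $\hat{u}(z_0)$, such that $X_{h(z_0)}(\hat{u}(z_0))$ is any prescribed vertical vector; the zero fiber-average constraint is inessential, because adding a constant to a Hamiltonian function does not alter its Hamiltonian vector field, so one may cut off outside $U$ and renormalize. Crucially, no somewhere-injectivity hypothesis on $\hat{u}$ is needed: distinct $z \in D$ automatically map to distinct fibers of $u^*E$, so localization in the parameter $z$ automatically localizes along the image of $\hat{u}$. Pairing this $h$ against $\eta(z_0)$ yields a nonzero contribution, contradicting orthogonality.

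Once the Banach submanifold property is established, the projection $\mc{M}^{univ}(A)\to \mc{H}$ is Fredholm of index equal to the Fredholm index of $D_{\hat{u}}$. Sard-Smale then produces the comeager set $\mc{H}^{reg}$ of regular values, and for $H \in \mc{H}^{reg}$ the preimage $\mc{M}(A,J_F,H)$ is a smooth manifold of that dimension. Applying Riemann-Roch to the Cauchy-Riemann operator on the complex bundle $\hat{u}^*TF$ over $D$ with totally real boundary condition $\hat{u}\vert_{\partial D}^*TL_F$ yields index $\dim_{\R} L_F + \mu_F(A)$, giving \eqref{liftdimensionformula}. A standard Taubes-style argument with a nested sequence of regularity thresholds removes the dependence on $k$ and produces genuinely smooth connections in $\mc{H}^{reg}$, completing the proof.
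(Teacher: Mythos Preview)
The paper does not give its own proof of this statement; it is quoted as Theorem 5.1 of \cite{salamonakveld} and used as background. Your outline is correct and matches the standard argument in that reference, and it is also the same template the paper uses when it proves its own Theorem \ref{liftthm} (with $PU_{m+1}$-valued gauge transformations in place of Hamiltonian connections): form the universal moduli space, use Hahn-Banach plus unique continuation for $\eta$, produce a localized perturbation pairing nontrivially with $\eta$, then apply Sard-Smale and the Taubes argument. Your remark that somewhere-injectivity is automatic for sections because distinct domain points land in distinct fibers is exactly the mechanism that makes this work, and is the same reason the paper's gauge-transformation variant goes through without a multiple-cover hypothesis.
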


\begin{remark}\label{movingboundaryproblem}
A lift of a disk $u$ is equivalent to solving the \emph{inhomogeneous moving boundary problem}:
\begin{gather*}
\hat{u}:D\rightarrow F\\
\bar{\partial}_{J,j}\hat{u}=X^{0,1}_\sigma\\
\hat{u}(e^{i\theta})\in \phi^\theta_\sigma (L_F)
\end{gather*}
where 
$$X_\sigma^{0,1}:=J_I\circ X_{\sigma\circ j}+X_{\sigma}$$
is the anti-holomorphic part of the 1-form $X_\sigma$.
\end{remark}

This remark motivates the following definition. Define the \emph{covariant derivative} of a section $\hat{u}$ with respect to a Hamiltonian connection $H_\sigma$ is
\begin{equation}
\nabla^\sigma \hat{u}:= \partial_{J_I,j}\hat{u}-X^{1,0}_\sigma
\end{equation}
and we can define the covariant energy with respect to a connection form $a_\sigma$ as

\begin{definition}[Covariant energy]
$$e_\sigma(\hat{u}):=\int_D \vert \nabla^\sigma \hat{u}\vert_{\omega_F,J_I}$$
\end{definition}

Such a quantity satisfies some topological properties. First, the \emph{curvature} of a connection $H$ is the two form

\begin{equation}\label{curvaturedef}
R_H ds\wedge dt := a_H(\partial^\sharp_s,\partial^\sharp_t) ds\wedge dt
\end{equation}
where $\partial_s^\sharp$ is the horizontal lift of $\partial_s$ to the connection $H$ (resp. $\partial_t$). We remark that by the curvature identity \ref{curvatureidentity}, the function in \ref{curvaturedef} is the zero-average Hamiltonian corresponding to the vector field $[\partial_s^\sharp,\partial_t^\sharp]^{\t{vert}}$.

\begin{lemma}\label{covariantenergyprop}[Lemma 5.2 \cite{salamonakveld}]
$$e_\sigma(\hat{u})=\int_D \hat{u}^*a_H +\int_D R_H\circ \hat{u}\, ds\wedge dt$$
\end{lemma}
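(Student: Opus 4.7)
The plan is to establish the identity pointwise on $D$ by decomposing $d\hat{u}$ into its horizontal and vertical parts via the connection, then reading off the two summands of the right-hand side from the resulting bilinear expansion of $\hat{u}^{*}a_{H}$. The calculation rests on two structural properties of the minimal coupling form: on a pair of vertical vectors $a_{H}$ restricts to the fiber form $\omega_{F}$, while on a horizontal--vertical pair it vanishes, since the connection is defined as the $a$-orthogonal complement $H := TF^{\perp_{a}}$.

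Working in local conformal coordinates $(s,t)$ on $D$ and using that $\hat{u}$ is a section (so $d\pi\circ d\hat{u} = \mathrm{id}_{TD}$), one writes $d\hat{u}(\partial_{s}) = \partial_{s}^{\sharp} + (\nabla^{\sigma}\hat{u})(\partial_{s})$ and $d\hat{u}(\partial_{t}) = \partial_{t}^{\sharp} + (\nabla^{\sigma}\hat{u})(\partial_{t})$, with the correction terms lying in $TF$ by the definition of $\nabla^{\sigma}\hat{u}$ as the vertical part of the derivative. Expanding
\[
\hat{u}^{*}a_{H}(\partial_{s},\partial_{t}) = a_{H}\bigl(\partial_{s}^{\sharp} + (\nabla^{\sigma}\hat{u})(\partial_{s}),\,\partial_{t}^{\sharp} + (\nabla^{\sigma}\hat{u})(\partial_{t})\bigr)
\]
by bilinearity produces four contributions. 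The horizontal--horizontal term $a_{H}(\partial_{s}^{\sharp},\partial_{t}^{\sharp})$ is $R_{H}\circ\hat{u}$ by the definition \ref{curvaturedef} of curvature. The two mixed terms vanish because $H$ was chosen as the $a_{H}$-orthogonal complement of $TF$. The vertical--vertical term collapses to $\omega_{F}\bigl((\nabla^{\sigma}\hat{u})(\partial_{s}),(\nabla^{\sigma}\hat{u})(\partial_{t})\bigr)$, which is by construction the covariant energy density $|\nabla^{\sigma}\hat{u}|_{\omega_{F},J_{I}}$ evaluated on $(\partial_{s},\partial_{t})$. Rearranging and integrating over $D$ gives the stated identity.

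The hard part is not an analytic one but rather bookkeeping: the sign of $R_{H}$ (as the zero-mean Hamiltonian associated to $[\partial_{s}^{\sharp},\partial_{t}^{\sharp}]^{\mathrm{vert}}$), the choice of horizontal lift determined by the trivialization $J_{\sigma}$, and the $(1,0)$--$(0,1)$ splittings entering the definition of $\nabla^{\sigma}\hat{u}$ must be tracked consistently so that the ``$+$'' in the statement is produced rather than a ``$-$''. Once the conventions are fixed to match \ref{curvaturedef} and the definition of the covariant derivative preceding the lemma, no further input beyond the bilinear expansion is needed; in particular, one does \emph{not} use any equation of motion on $\hat{u}$, so the identity holds for arbitrary smooth sections and specializes to the usual energy identity on $J_{H}$-holomorphic ones, where $(\nabla^{\sigma}\hat{u})(\partial_{t}) = J_{F}(\nabla^{\sigma}\hat{u})(\partial_{s})$.
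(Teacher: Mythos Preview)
The paper does not give its own proof of this lemma; it is quoted from \cite{salamonakveld} without argument. Your horizontal/vertical decomposition of $d\hat{u}$ followed by the bilinear expansion of $\hat{u}^{*}a_{H}$ is exactly the standard proof one finds there (and in \cite[Chapter~8]{ms2}), so there is nothing to contrast.

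Two points are worth tightening. First, as written in this paper, $\nabla^{\sigma}\hat{u} := \partial_{J_I,j}\hat{u} - X_{\sigma}^{1,0}$ is a $(1,0)$-type object, not the full vertical projection of $d\hat{u}$ that you invoke in the splitting $d\hat{u}(\partial_s) = \partial_s^{\sharp} + (\nabla^{\sigma}\hat{u})(\partial_s)$. You should either check that the energy density $\lvert\nabla^{\sigma}\hat{u}\rvert_{\omega_F,J_I}$, under the paper's conventions, really coincides with the vertical--vertical pairing $\omega_F(\mathrm{vert}_s,\mathrm{vert}_t)$, or restrict to $J_H$-holomorphic sections where the $(0,1)$ piece vanishes and the two notions agree. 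Second, your expansion literally gives $\hat{u}^{*}a_H(\partial_s,\partial_t) = R_H\circ\hat{u} + \omega_F(\mathrm{vert}_s,\mathrm{vert}_t)$, hence $e_{\sigma} = \int\hat{u}^{*}a_H - \int R_H$ with the sign convention of \eqref{curvaturedef}; you are right that the ``$+$'' in the displayed statement is a bookkeeping matter, but it should be checked against the source rather than left implicit.
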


\subsection{Existence of a flat connection}\label{flatconnectionsection}
We begin to specialize to the case $E=\bb{P}(\mc{V})$ for a vector bundle $V\rightarrow \mc{V}\rightarrow B$.

Let $G_\bb{C}$ denote the complexified structure group of $E$, and $\t{Hol}(D,G_\bb{C},G)$ the group of holomorphic sections of the trivial principal $G_\bb{C}$-bundle that are $G$-valued on the boundary. We use a result of Donaldson \cite{sdh} to show that there is a section which trivializes the connection on $u^* \mc{V}$.

\begin{theorem}[Theorem 1 \cite{sdh}]\label{heatflowthm}
Let $\mc{V}$ be a holomorphic vector bundle over a K\"ahler manifold $Z$. For any metric $f$ on the restriction of $\mc{V}$ to $\partial Z$, there is a unique Hermitian metric $h$ that satisfies the Yang-Mills equation:
\begin{enumerate}
\item $h\vert_{\partial Z}=f$

\item $i\Lambda F_h=0$ \label{ymequation}
\end{enumerate}
where $i\Lambda F_h$ is the $(1,1)$ part of the curvature of $h$ in the K\"ahler decomposition.
\end{theorem}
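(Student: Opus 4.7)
The plan is to prove this via the Hermitian--Yang--Mills heat flow, which motivates the name of the containing subsection. Fix any smooth background metric $h_0$ on $\mc{V}$ with $h_0\vert_{\partial Z}=f$, and parametrize all candidate Hermitian metrics as $h=h_0 e^{s}$ where $s$ is a smooth $h_0$-self-adjoint endomorphism of $\mc{V}$. In these coordinates $i\Lambda F_h=0$ becomes a quasilinear second order elliptic equation in $s$, and the natural gradient-type flow to consider is
$$\partial_t s = -i\Lambda F_{h_0 e^s}, \qquad s\vert_{\partial Z}=0, \qquad s\vert_{t=0}=0.$$
Short-time existence of this flow follows from standard quasilinear parabolic theory with smooth Dirichlet boundary data, so the entire content is extracting a priori estimates strong enough for long-time existence and subsequential convergence.

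The main task, and the principal obstacle, is the uniform-in-time $C^0$ bound on $s$. I would exploit Donaldson's functional $M(h_0,h)$, which is convex along geodesics in the space of Hermitian metrics and strictly decreasing along the flow; combined with a maximum principle applied to the quantity $\t{tr}(e^s)+\t{tr}(e^{-s})-2\t{rk}(\mc{V})$, which is a subsolution of an appropriate linear elliptic operator and vanishes on $\partial Z$, this should yield $\Vert s\Vert_{C^0}\leq C$ independent of $t$. In the closed case (Donaldson, Uhlenbeck--Yau) this is precisely the step that requires bundle stability; here, however, the Dirichlet boundary data pins the metric at $\partial Z$ and a cleaner maximum-principle argument substitutes for the algebraic hypothesis. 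Once $s$ is $C^0$-bounded, interior elliptic/parabolic bootstrapping together with boundary Schauder estimates, available because both $\partial Z$ and the data $f$ are smooth, deliver uniform $C^k$ bounds for all $k$. Long-time existence and subsequential convergence as $t\to\infty$ then follow from Arzel\`a--Ascoli, and the limit $h_\infty$ satisfies $i\Lambda F_{h_\infty}=0$ with $h_\infty\vert_{\partial Z}=f$.

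Uniqueness runs on an analogous maximum principle argument. Given two solutions $h_1,h_2$, write $h_2=h_1 e^{s}$ with $s$ self-adjoint and $s\vert_{\partial Z}=0$. The identity $i\Lambda F_{h_1 e^s}-i\Lambda F_{h_1}=0$ linearizes to an elliptic PDE for $s$ of the form $\Delta s + (\t{lower order in } s)\cdot s=0$ with zero Dirichlet data; equivalently, the second variation of Donaldson's functional along the geodesic from $h_1$ to $h_2$ is nonnegative. Applying the maximum principle to $\t{tr}(e^s)+\t{tr}(e^{-s})-2\t{rk}(\mc{V})$, which is nonnegative, vanishes on $\partial Z$, and is subharmonic for the linearization, forces it to vanish identically, hence $s\equiv 0$ and $h_1=h_2$.
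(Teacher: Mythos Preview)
The paper does not prove this theorem at all: it is quoted verbatim as Theorem~1 of Donaldson's paper \cite{sdh} and used as a black box to obtain Corollary~\ref{heatflowgaugever}. There is therefore no ``paper's own proof'' to compare against; the entire content of Section~\ref{flatconnectionsection} is to \emph{apply} Donaldson's result in the special case $Z=D$, where condition~\eqref{ymequation} reduces to flatness and hence yields a holomorphic trivialization of $u^*\mc{V}$.

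That said, your sketch is a faithful outline of Donaldson's actual argument in \cite{sdh}: he runs the Hermitian--Yang--Mills heat flow with Dirichlet boundary data, obtains the $C^0$ bound via a maximum principle (the boundary condition replacing the stability hypothesis needed in the closed case), bootstraps to higher regularity, and extracts a convergent subsequence; uniqueness follows from convexity of the Donaldson functional along geodesics together with the boundary anchoring. So your proposal is correct in spirit and method, but be aware that in the context of this paper you are reproving a cited result rather than filling in an omitted argument.
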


When $Z$ is a disk, \ref{heatflowthm} says 
$$\mc{V}\cong (D\times V,j\times J_V)$$
as holomorphic vector bundles due to \ref{ymequation} simply stating that $h$ induces a flat connection. 

Let $h_\infty$ denote the flat metric on $u^*\mc{V}$ from \ref{heatflowthm} that agrees with $h_0$ over $\partial D$. Then there exists a smooth section 
$$\mc{G}:(D,\partial D)\rightarrow (G_\bb{C},G)$$ such that $$h_\infty(\cdot,\cdot)=h_0(d\mc{G}\cdot,d\mc{G}\cdot)=:\mc{G}^*h_0.$$ The discussion leads to the following corollary of \ref{heatflowthm}:

\begin{corollary}\label{heatflowgaugever}
Let $u^*\mc{V}$ be a holomorphic vector bundle equipped with a fiberwise $G_\bb{C}$ invariant complex structure $J_V$ and K\"ahler metric $h_0$. Then there is complex gauge transformation $\mc{G}\in \mc{C}^\infty (D,G_\bb{C},G)$ that is $G$ valued on $\partial D$ so that $h_\infty:=\mc{G}^*h_0$ is flat.
\end{corollary}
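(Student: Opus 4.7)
The plan is to apply Theorem \ref{heatflowthm} and then realize the resulting flat metric as the pullback of $h_0$ under a complex gauge transformation. First I would feed $Z = D$ and boundary metric $f := h_0\vert_{\partial D}$ into Theorem \ref{heatflowthm}, producing a Hermitian metric $h_\infty$ on $u^*\mc{V}$ with $h_\infty\vert_{\partial D} = h_0\vert_{\partial D}$ and $i\Lambda F_{h_\infty}=0$. Since $D$ is a Riemann surface the Chern curvature is automatically of type $(1,1)$, so vanishing of $i\Lambda F_{h_\infty}$ forces $F_{h_\infty}=0$ everywhere, i.e.\ $h_\infty$ is flat, recovering the statement after Theorem \ref{heatflowthm}.

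Next I would relate the two Hermitian structures by the unique smooth endomorphism $H \in C^\infty(D,\text{End}(u^*\mc{V}))$ defined fibrewise by
\[
h_\infty(v,w) = h_0(Hv,w), \qquad v,w\in u^*\mc{V}.
\]
Because both $h_0$ and $h_\infty$ are Hermitian and positive-definite, $H$ is $h_0$-self-adjoint and positive-definite pointwise, and the boundary condition gives $H\vert_{\partial D} = \t{Id}$. I would then set $\mc{G} := H^{1/2}$ using the pointwise positive square root. This is smooth because the positive-definite spectrum of $H$ stays in a fixed compact subset of $(0,\infty)$ on compact subsets of $D$, so the smooth functional calculus applies; and pointwise $\mc{G} \in GL(m{+}1,\bb{C}) = G_\bb{C}$, so $\mc{G}$ is a complex gauge transformation. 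On $\partial D$ we have $H = \t{Id}$ and hence $\mc{G} = \t{Id} \in G$.

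Finally, using $h_0$-self-adjointness of $H^{1/2}$, I would verify
\[
(\mc{G}^*h_0)(v,w) = h_0(H^{1/2}v,H^{1/2}w) = h_0(Hv,w) = h_\infty(v,w),
\]
so $\mc{G}^*h_0 = h_\infty$ is flat, as required.

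The construction has no serious obstacle: Donaldson's theorem supplies the analytical input, and the remaining content is the Cartan-type decomposition $GL(m{+}1,\bb{C}) = U(m{+}1)\cdot \exp(\mathfrak{p})$ applied fibrewise, with the positive-Hermitian factor being our $\mc{G}$. The only point requiring mild care is smoothness of the square root, which is handled by the observation that $H$ is smooth with spectrum varying continuously in the open positive cone, so $H^{1/2}$ depends smoothly on $H$ via either a contour integral or the spectral calculus.
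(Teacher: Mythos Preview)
Your proposal is correct and follows the same line as the paper: apply Donaldson's theorem with boundary data $h_0\vert_{\partial D}$ to obtain the flat metric $h_\infty$, then realize $h_\infty$ as $\mc{G}^*h_0$ for a complex gauge transformation that is $G$-valued on $\partial D$. The paper merely asserts the existence of such a $\mc{G}$ in the discussion preceding the corollary, whereas you supply the standard explicit construction via the $h_0$-positive square root of the endomorphism $H$ relating the two metrics; this is exactly the Cartan decomposition argument one would expect and fills in what the paper leaves implicit.
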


Next, we show that we can obtain holomorphic sections of the projectivization by looking at the gauge-transformed pullback. For $\mc{G}\in \mc{C}^\infty(D,G_\bb{C},G)$ we get an isomorphism of bundles $$\mc{G}:u^*\bb{P}(\mc{V})\rightarrow u^*\bb{P}(\mc{V})$$
which has a well-defined tangent map $d\mc{G}$. 
\begin{definition}
For the pullback symplectic connection $H$ discussed in the beginning of section \ref{holomorphicsectionssection}, define the connection 
$$\mc{G}_*H:=d\mc{G}(H)$$ on $u^*\bb{P}(\mc{V})$ with respect to the fiberwise symplectic form $\mc{G}^*\mc{I}m (h)$.
\end{definition}
On $\mc{G}u^*\bb{P}(\mc{V})$ we define the almost complex structure
\begin{equation}
\mc{G}^*J_H:=d\mc{G}\circ J_H\circ d\mc{G}^{-1}
\end{equation}
\begin{lemma}\label{hologaugelemma}
$\mc{G}^*J_H$ is the unique almost complex structure compatible with $\mc{G}_*H$ from Lemma \ref{uniqueacs}. Hence, $\mc{G}$ preserves holomorphic sections.
\end{lemma}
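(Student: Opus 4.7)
The plan is to verify that $\mc{G}^*J_H$ satisfies the three defining properties of Lemma \ref{uniqueacs} with respect to the connection $\mc{G}_*H$ and the fiberwise complex structure $J_F$; then uniqueness in Lemma \ref{uniqueacs} forces the identification. Since $\mc{G}$ is a bundle automorphism covering the identity on $D$, it preserves fibers, so $d\mc{G}$ restricts to an automorphism of $TF$. Moreover, $\mc{G}$ takes values in $G_\bb{C}\subset PGL_{m+1}(\bb{C})$, whose action on each projective fiber is by holomorphic projective automorphisms, so $d\mc{G}|_{TF}\circ J_F\circ d\mc{G}^{-1}|_{TF}=J_F$. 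This gives condition (1), namely $\mc{G}^*J_H|_{TF}=J_F$.

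For condition (2), the identity $\pi\circ\mc{G}=\pi$ implies $d\pi\circ d\mc{G}=d\pi$, so
\[
d\pi\circ \mc{G}^*J_H = d\pi\circ d\mc{G}\circ J_H\circ d\mc{G}^{-1} = d\pi\circ J_H\circ d\mc{G}^{-1} = j\circ d\pi\circ d\mc{G}^{-1} = j\circ d\pi,
\]
i.e. $\pi$ is $(j,\mc{G}^*J_H)$-holomorphic. Condition (3) is tautological: by definition $\mc{G}_*H=d\mc{G}(H)$, and $J_H(H)\subseteq H$ yields $\mc{G}^*J_H(\mc{G}_*H)=d\mc{G}(J_H(H))\subseteq d\mc{G}(H)=\mc{G}_*H$. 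The fiberwise compatibility of the pair $(\mc{G}^*\mc{I}m(h), J_F)$ is inherited from the compatibility of $(\mc{I}m(h),J_F)$, so Lemma \ref{uniqueacs} applies and the uniqueness statement identifies $\mc{G}^*J_H$ with the canonical almost complex structure determined by $\mc{G}_*H$.

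For the final assertion, if $\hat{v}:D\to u^*\bb{P}(\mc{V})$ is a $J_H$-holomorphic section, then $\mc{G}\circ\hat{v}$ is again a section and the chain rule gives
\[
d(\mc{G}\circ\hat{v})\circ j = d\mc{G}\circ d\hat{v}\circ j = d\mc{G}\circ J_H\circ d\hat{v} = \mc{G}^*J_H\circ d\mc{G}\circ d\hat{v} = \mc{G}^*J_H\circ d(\mc{G}\circ\hat{v}),
\]
so $\mc{G}\circ\hat{v}$ is $\mc{G}^*J_H$-holomorphic. The inverse $\mc{G}^{-1}$ supplies a two-sided inverse of this correspondence, so $\mc{G}$ induces a bijection between $J_H$-holomorphic sections and $\mc{G}^*J_H$-holomorphic sections.

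The only place that requires care is verifying that $\mc{G}|_F$ genuinely acts holomorphically on fibers in condition (1); this is precisely where the hypothesis $\mc{G}\in\mc{C}^\infty(D,G_\bb{C},G)$ rather than a more general diffeomorphism is used, and without it the fiberwise structure would be twisted and the identification with the canonical $J_{\mc{G}_*H}$ would fail.
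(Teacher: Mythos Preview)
Your proof is correct and takes a somewhat different route from the paper's. The paper works in trivializing coordinates: it writes the gauge-transformed horizontal distribution explicitly as $(-\mc{G}_*X_y,y)$, computes the block-matrix form of the unique almost complex structure from Lemma~\ref{uniqueacs}, and identifies it with $\mc{G}^*J_H$ using $G_\bb{C}$-equivariance of $J_I$. You instead verify the three characterizing conditions of Lemma~\ref{uniqueacs} abstractly, without passing to coordinates. For the second assertion the paper unpacks the inhomogeneous moving boundary formulation (Remark~\ref{movingboundaryproblem}) and checks directly that $\bar{\partial}_{J_I,j}\pi_F(\mc{G}v)=(\mc{G}_*X_\sigma)^{0,1}$, whereas your chain-rule argument is the general fact that a bundle automorphism sends $J$-holomorphic sections to $(\mc{G}^*J)$-holomorphic ones.

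Your approach is cleaner and more conceptual. The paper's coordinate computation has the side benefit of producing the explicit matrix expression~\eqref{gaugeacs}, which is reused later when linearizing in the proof of Theorem~\ref{liftthm}; if you intend to follow that argument you will eventually need this expression anyway.
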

\begin{proof}
$\mc{G}_*H$ is of the form

$$(-\mc{G}_*X_y, y)$$

Thus, the unique almost complex structure on $\mc{G}(v^*E)$ from Lemma \ref{uniqueacs} is
\begin{equation}\label{gaugeacs}
\begin{bmatrix}
J_I  & J_I \mc{G}_*X_\sigma-\mc{G}_*X_{\sigma\circ j}\\
0 & j
\end{bmatrix}
\end{equation}
in the trivial connection, which is equal to $\mc{G}^*J_H$ by the $G_\bb{C}$ equivariance of $J_I$ and the fact that $\mc{G}$ acts as the identity of on the $TD$ component of the trivial splitting.

To check the second statement, let $v$ be a $J_H$-holomorphic section $\pi_F$ be the projection onto the fiber in the above trivialization and $dv^F:=\pi_{TF}\circ dv$ be the projection onto the $TF$ component in the induced trivial connection. Since $v$ is $J_H$ holomorphic, we have
\begin{equation}\label{hamiltonianperturbedeqn}
\bar{\partial}_{J_I,j}\pi_F v=dv^F+J_I\circ dv^F\circ j=X_\sigma^{0,1}
\end{equation}
wherefore $\mc{G}v$ satisfies
\begin{align*}\bar{\partial}_{J_I,j}\pi_F\mc{G}v &=\mc{G}_*dv^F+J_I\circ\mc{G}_*dv^F\circ j
\\ &=\mc{G}_*\bar{\partial}_{J_I,j}\pi_Fv=\mc{G}_*X_\sigma^{0,1}
\\ &=(\mc{G}_*X_\sigma)^{0,1}
\end{align*}
again by the $G_\bb{C}$-invariance of $J_I$. By remark \ref{movingboundaryproblem} a solution to \ref{hamiltonianperturbedeqn} is equivalent to a $J_H$-holomorphic section, so we are done.
\end{proof}

\subsection{Holomorphic lifts to $(E,L)$}\label{holomorphicliftsubsection}
Continuing from the last subsection, we show that there is at least one holomorphic lift of a base disk through each fiber point. However, we cannot always achieve transversality for a given lift by gauge transformation: In example \ref{realprojectivebundles} the structure group is $S^1$, and in lieu of Theorem \ref{transversalitythm} such an action does not provide a large enough space of connections to achieve transversality, particularly for the constant sections that are fixed by the gauge group. We get around this by allowing transformations of $u^*E$ that take values in $PU_2$ over the interior of the disk. Such a group acts via holomorphic symplectomorphisms and the differential $\mf{g}\rightarrow T_pF$ is surjective at every point, allowing sufficiently many perturbations to achieve transversality. In this section, we record the existence of a lift and prove a transversality result.

Since $E=\bb{P}(\mc{V})$ is the projectivization of a vector bundle and the symplectic connection is induced by a choice of Hermitian metric on $\mc{V}$, we have canonical action via $PU_{m+1}$ on $E$ that preserves the fiberwise symplectic form and the standard complex structure. Fix a (left invariant) inner product $\langle\cdot,\cdot\rangle$ on $PU_{m+1}$ with norm $\vert\cdot\vert$. The left action of such a group on $v^*E$ preserves the fiberwise symplectic form and integrable complex structure and acts on connection by the discussion in subsection \ref{flatconnectionsection}. We define the space of perturbations
$$\mf{G}^l:= W^{l,2}_{\partial D, D}(G,PU_{m+1}):=\bigg\lbrace \mc{G}:(\partial D,D)\rightarrow (G,PU_{m+1}):\int_D \vert D^i \mc{G} \vert^2 dxdy <\infty \bigg\rbrace$$
as the Banach manifold of $l$-differentiable $2$-integrable $PU_{m+1}$-valued gauge transformations of $u^*E$ that take values in $G$ over $\partial D$. Take $l$ large enough so that such a space includes into continuous functions by Sobolev embedding. At a given point the tangent space is written as
$$T_\mc{G}\mf{G}^l= W^{l,2}_{\partial D,D}(\mf{g},\mf{pu}_{m+1})$$
With local charts given by the exponential map and transitions by left shift. Note that such a space can also be written as the space of bundle isomorphisms covering the identity that are differentiable to some degree.

\subsubsection{Stabilizing Divisors}
A crucial component of the perturbation scheme is the existence of a stabilizing divisor for a certain class of rational Lagrangians $L_B$.
\begin{definition}
We say a Lagrangian $L_B\subset B$ is \emph{strongly rational} if there exists a line bundle-with-connection on $B$ with a section $\chi$ whose restriction $\chi\vert_{L_B}$ is covariant constant.
\end{definition}

\begin{definition}
We say that an almost complex submanifold $D_B\subset B\setminus L_B$ with $[D_B]^{PD}=N[\omega_B]$ is a \emph{stabilizing divisor} for $L_B$ if $\forall [v]\in H^\circ_2(B,L_B,\bb{Z})$ with positive symplectic area we have 
$$v^{-1}( D_B) \neq \emptyset$$
\end{definition}

For a stabilizing divisor $D_B$, call an almost complex structure $J_B$ \emph{adapted to $D_B$} if it is an almost complex submanifold with respect to $J_B$. The following is made possible by a number of results, with the stabilizing property made possible by the work in \cite{CW1}.

\begin{theorem}[\cite{CW1} Section 3.1,\cite{agm},\cite{SD}]
For $B,L_B$ rational, there exists a stabilizing divisor with an adapted almost complex structure.
\end{theorem}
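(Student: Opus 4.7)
The plan is to combine the three cited results into a single construction. First, I would apply Donaldson's asymptotically holomorphic techniques \cite{SD}: since $B$ is rational, some multiple $N\omega_B$ is integral and is the curvature of a prequantum line bundle $\mathcal{L} \to B$. Choose an $\omega_B$-compatible almost complex structure $J_B^{(0)}$ and apply Donaldson's theorem to $\mathcal{L}^{\otimes N}$ for $N$ sufficiently large, producing an asymptotically holomorphic section whose zero locus is a symplectic hypersurface $D_B \subset B$ Poincaré dual to $N[\omega_B]$. After a small $C^2$-small perturbation of $J_B^{(0)}$, one obtains an almost complex structure $J_B$ for which $D_B$ is an honest almost complex submanifold; this is the adaptedness condition.

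Second, I would upgrade the construction to the relative setting following Auroux-Gayet-Mohsen \cite{agm}. Because $L_B$ is strongly rational, the covariant constant section $\chi$ of the line bundle from the definition of strong rationality restricts to a nonvanishing section of $\mathcal{L}|_{L_B}$. The \cite{agm} technique uses $\chi^{\otimes N}$ as a reference section on a neighborhood of $L_B$, then perturbs it in the complement to produce an asymptotically holomorphic section whose zero set avoids $L_B$. The resulting Donaldson hypersurface $D_B$ lies in $B \setminus L_B$, as required.

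Third, I would invoke the stabilization property from Section~3.1 of \cite{CW1}. For a relative class $[v] \in H_2^\circ(B, L_B, \mathbb{Z})$ with positive symplectic area, one has the intersection pairing $[v]\cdot[D_B] = N\int_D v^*\omega_B > 0$. Charest and Woodward show, via refined estimates on the asymptotically holomorphic sections and on the local geometry near $L_B$, that for $N$ large enough this positive intersection number is actually realized by points in the interior of $D$, so that $v^{-1}(D_B) \neq \emptyset$. Together with the adapted $J_B$ produced in the first step, this yields both properties in the statement.

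The main obstacle, and the content that is genuinely borrowed from \cite{CW1}, is reconciling the avoidance property of \cite{agm} with the stabilization property in a single choice of $D_B$. Naively, shrinking the neighborhood of $L_B$ where $\chi^{\otimes N}$ dominates could destroy the uniform transversality estimates needed to guarantee nonempty intersection with every positive-area disk. The resolution is to take the scale of the $L_B$-neighborhood to zero at a carefully controlled rate as $N\to \infty$, which is precisely the estimate worked out in \cite{CW1}; once this is in hand, everything else is formal.
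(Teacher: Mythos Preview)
The paper does not supply its own proof of this theorem; it is stated as a black-box result imported from the three cited references, with only the one-line remark that ``the stabilizing property [is] made possible by the work in \cite{CW1}.'' Your sketch correctly identifies the division of labor among the citations exactly as the paper intends: Donaldson \cite{SD} for the existence of asymptotically holomorphic hypersurfaces, Auroux--Gayet--Mohsen \cite{agm} for arranging $D_B\subset B\setminus L_B$ via the covariant-constant section over $L_B$, and Charest--Woodward \cite{CW1} for the stabilizing property $v^{-1}(D_B)\neq\emptyset$. So your proposal is consistent with, and in fact more detailed than, what the paper provides.

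One small correction to your final paragraph: the stabilizing argument in \cite{CW1} is not primarily about balancing the shrinking rate of the $L_B$-neighborhood against transversality estimates. It is more topological: since $D_B$ is Poincar\'e dual to $N[\omega_B]$ and is exact in the complement of $L_B$, the boundary circle $v(\partial D)$ has linking number $N\omega_B([v])>0$ with $D_B$, and Charest--Woodward show (using that $D_B$ bounds in $B\setminus L_B$ in the appropriate sense) that this forces a geometric intersection. Your intersection-number computation $[v]\cdot[D_B]=N\int v^*\omega_B>0$ is the right starting point, but positive algebraic intersection alone does not give $v^{-1}(D_B)\neq\emptyset$ for an arbitrary continuous representative; the extra input is that $D_B$ is the zero set of a section that is nonvanishing near $L_B$, so the intersection number equals a winding number that must be realized geometrically.
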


Let $D$ be a lift of the divisor $D_B$ to the total space. Since $D_B$ is stabilizing for $L_B$, we have that every representative of a non-trivial $[u]\in H_2(B,L_B,\bb{Z})$ intersects $D_B$ at least once. It follows that every (section or multisection) of $u^*E$ intersects $D$ in at least one point. The expected dimension of a configuration in $u^*E$ through a point $p\in L_F$ with prescribed intersection to $D_E$ in the relative class $[A]$ is
\begin{equation}\label{liftdimensionformula2}
\I (A)= \sum_{i=1}^j I(v_i)-2\langle A, D_E\rangle_{u^*E}+2\vert \text{Edge}^\bullet(\gamma_u)\vert
\end{equation}
where  $\text{Edge}^\bullet(\gamma_u)$ is the number of interior marked points on $u$ prescribed to the divisor $D_B$. By positivity of intersection of holomorphic curves with the divisor, $\langle A, D_E\rangle_{v^*E}$ is equal to the sum of the degrees of vanishing of the normal derivative in coordinates adapted to the divisor.
On the other hand, we will usually assume that one can perturb the almost complex structure and/or divisor so that each disk intersects the divisor transversely, making the last two terms on the right hand side of \ref{liftdimensionformula2} irrelevant.

\begin{theorem}\label{liftthm}

Let $A\in H_2( u^*E,u^*L, \bb{Z})$ be a relative section class and $p\in L$ a generic point. There is a comeager set of smooth gauge transformations $\mf{G}^{\infty,reg}\subset \mf{G}^{\infty}$ such that the moduli space
$$\mc{M}_{j,\mc{G}^*J_H}(u^*E,u^*L,A,p)$$
of $\mc{G}^*J_H$-holomorphic sections for $\mc{G}\in \mf{G}^{\infty,reg}$ in the class $A$ is a smooth manifold of expected dimension
$$\dim \mc{M}_{j,\mc{G}^*J_H}(u^*E,u^*L,A,p)=\I (A)$$

Moreover, there exists a trivialization of $u^*E$ that preserves the fiberwise complex structure and the boundary Lagrangian. In particular, let $p\in L_F\cap \pi^{-1}(1)$. There exists a (regular) $\mc{G}^*J_H$-holomorphic section $\hat{u}$ of $u^*(E,L)$ with $\hat{u}(1)=p$ and $\mc{G}\in \mf{G}^{\infty,reg}$
\end{theorem}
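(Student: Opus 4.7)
The plan is to set up a parametric Cauchy-Riemann problem over $\mf{G}^l$ and invoke Sard-Smale. Let $\mc{B}^{k,p}$ denote the Banach manifold of $W^{k,p}$-sections of $u^*E$ in the class $A$ with boundary on $u^*L$ and subject to the evaluation constraint $\hat{u}(1)=p$. Form the universal moduli space
\[
\mc{M}^{\text{univ}}:=\bigl\{ (\hat{u},\mc{G})\in \mc{B}^{k,p}\times \mf{G}^l : \bar{\partial}_{j,\mc{G}^*J_H}\hat{u}=0\bigr\},
\]
and write $\mc{F}$ for the defining section. Holding $\mc{G}$ fixed, $D_1\mc{F}$ is the vertical Cauchy-Riemann operator with Lagrangian boundary and a point constraint at $z=1$, hence Fredholm of index $\I(A)$ via the formula \ref{liftdimensionformula2}. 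Granted that the full differential $D\mc{F}$ is surjective at every zero, $\mc{M}^{\text{univ}}$ is a separable Banach manifold, the projection $\pi:\mc{M}^{\text{univ}}\to \mf{G}^l$ is Fredholm of the same index, and Sard-Smale produces a comeager regular set $\mf{G}^{l,reg}$ with smooth fibers of dimension $\I(A)$. A Taubes-style intersection over $l$ followed by restriction to smooth gauges delivers $\mf{G}^{\infty,reg}\subset \mf{G}^\infty$.

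The key step is surjectivity of $D\mc{F}$. A tangent vector $\gamma\in W^{l,2}_{\partial D,D}(\mf{g},\mf{pu}_{m+1})$ acts infinitesimally on $\mc{G}^*J_H$ by conjugation, and the resulting variation $D_2\mc{F}(\gamma)$ along $\hat{u}$ is a $(0,1)$-form valued in the fundamental vector field $X_\gamma$ of the fiberwise $PU_{m+1}$-action. By duality, surjectivity of $D\mc{F}$ reduces to showing that any $L^q$-section $\eta$ in the cokernel of $D_1\mc{F}$ that annihilates every such variation must vanish. Since $\mf{pu}_{m+1}\to T_{\hat{u}(z)}F$ is surjective at each interior $z$ and $\gamma$ is unconstrained on the interior, testing against $\gamma$ supported on a small interior disk with arbitrary direction in $\mf{pu}_{m+1}$ forces $\eta$ to vanish pointwise there. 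Aronszajn's unique continuation for the formal adjoint of $D_1\mc{F}$ then propagates the vanishing throughout the interior of $D$, and elliptic boundary regularity extends it up to $\partial D$.

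For the moreover statement, apply Corollary \ref{heatflowgaugever} to obtain a complex gauge $\mc{G}_0\in \mc{C}^\infty(D,G_\bb{C},G)$ with $\mc{G}_0^*h_0$ flat. Because $\mc{G}_0$ is $G$-valued on $\partial D$, passing to the projectivization exhibits $u^*E$ as the trivial bundle $D\times \bb{CP}^m$ with the product almost complex structure, preserving the fiberwise complex structure and the boundary Lagrangian $\partial D\times L_F$. In this trivialization, holomorphic maps $D\to \bb{CP}^m$ with boundary on $L_F$ are precisely the $J_H$-holomorphic sections, and for every prescribed boundary point $q\in L_F$ representatives of the class $A$ exist by the classical theory of $\bar{\partial}$-equations on $(D,\partial D)$ with target $(\bb{CP}^m,L_F)$. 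Given any $\mc{G}\in \mf{G}^{\infty,reg}$, set $q:=\mc{G}(1)^{-1}p$, which lies in $L_F$ since $\mc{G}(1)\in G$, take $\hat{u}_0$ to be such a holomorphic section in the class $A$ through $q$, and define $\hat{u}:=\mc{G}\hat{u}_0$. Lemma \ref{hologaugelemma} makes $\hat{u}$ a $\mc{G}^*J_H$-holomorphic section, evaluation at $z=1$ gives $\hat{u}(1)=\mc{G}(1)q=p$, and regularity is automatic since $\mc{G}\in \mf{G}^{\infty,reg}$.

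The main obstacle is the cokernel vanishing of the second paragraph: the $PU_{m+1}$-action on the projective fiber is not free and $\gamma$ is only fully free on the interior of $D$, so one must pin down that interior variations alone exhaust the cokernel while ensuring that the boundary regularity of $\eta$ is sufficient for Aronszajn's argument to propagate up to $\partial D$.
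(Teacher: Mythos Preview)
Your transversality argument is essentially the paper's: both set up the universal problem over $\mf{G}^l$, linearize, and kill a hypothetical cokernel element $\eta$ using the surjectivity of $\mf{pu}_{m+1}\to T_{\hat u(z)}F$ together with the freedom of $\mf{h}$ on the interior of $D$. The paper phrases the contradiction as a single bump-function perturbation making the pairing strictly positive, whereas you first force $\eta\equiv 0$ on an interior disk and then invoke Aronszajn; these are equivalent, and your worry about freeness of the $PU_{m+1}$-action is a red herring since only transitivity (surjectivity of the infinitesimal action) is used.

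There is, however, a genuine gap in your ``moreover'' paragraph. You assert that $q:=\mc{G}(1)^{-1}p\in L_F$ because $\mc{G}(1)\in G$, and implicitly that $\hat u=\mc{G}\hat u_0$ has boundary on $u^*L$. Neither follows: the structure group $G$ need not preserve the fiber Lagrangian $L_F$. In the paper's own Example~\ref{realprojectivebundles} one has $G\subset S^1$ acting by $[z_0:z_1]\mapsto[z_0:e^{i\theta}z_1]$, which does \emph{not} preserve $\bb{RP}^1$. Consequently $\mc{G}\hat u_0$ is a $\mc{G}^*J_H$-holomorphic section with boundary on $\mc{G}(u^*L)$, not on $u^*L$, so it does not lie in the moduli space $\mc{M}_{j,\mc{G}^*J_H}(u^*E,u^*L,A,p)$ you are after. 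Relatedly, the claim that holomorphic disks in $(\bb{CP}^m,L_F)$ through an arbitrary $q$ exist in a \emph{prescribed} class $A$ is stronger than what the theorem asserts or what the paper proves.

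The paper avoids this by never post-composing with a regular $\mc{G}$. It constructs the section $v^0_p$ directly as a $J_H$-holomorphic section with boundary on $u^*L$: the flattening gauge $\mc{G}_\bb{C}$ of Corollary~\ref{heatflowgaugever} followed by parallel transport $\mc{T}$ in the flat connection gives a biholomorphism $\mc{T}\circ\mc{G}_\bb{C}:(u^*E,J_H)\to(D\times\bb{CP}^m,j\times J_I)$ sending $u^*L$ to $\partial D\times L_F$, and one pulls back the constant section through $p$. This establishes nonemptiness of the universal moduli space over $\mc{G}=\mathrm{Id}$, and the passage to a regular $\mc{G}$ then comes from the Sard--Smale part combined with smoothness of the universal space, not from pushing the section around by $\mc{G}$.
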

The final statement was initially made in \cite{fiberedpotential}. We include a proof for completeness and clarity.
\begin{remark}\label{computeremark}
The proof of Lemma \ref{hologaugelemma} is exactly the same in case $\mc{G}$ takes values in $PU_{m+1}$ over $D$. 
Hence by Theorem \ref{liftthm} it suffices to compute holomorphic sections through $p$ in the given connection and then perturb with a generic gauge transformation that is close enough to the identity so as to not change the relative class.
\end{remark}
\begin{proof}
First we show the existence of such a section, and then prove regularity for more general maps.
Let $\mc{G}_\bb{C}$ denote the flattening gauge transformation from Corollary \ref{heatflowgaugever} The action by $G$ is Hamiltonian, so it follows that $\mc{G}_\bb{C}(L)$ is fiberwise Lagrangian. The parallel transport along $\mc{G}_\bb{C}(H)\vert_{\partial D}$ is given by

$$\mc{G}_\bb{C}\circ \nabla^H\circ \mc{G}_\bb{C}^{-1}$$

where $\nabla^H$ is transit along $H\vert_{\partial D}$. Since $L$ is invariant under parallel transit, it follows that $\mc{G}_\bb{C}(L)$ must also be. Hence the fibered Lagrangian construction Lemma \ref{fiberedlagrangianlemma} tells us that $\mc{G}_\bb{C}(L)$ is Lagrangian. 

Since $\mc{G}_{\bb{C}*} H$ is flat, parallel transport starting at $u(1)$ defines a holomorphic trivialization
$$\mc{T}:[\mc{G}_\bb{C}u^*E,\mc{G}_{\bb{C}*}L,J_H]\cong [D \times F,\partial D\times L_F, j\times J_I]$$
Then by lemma \ref{hologaugelemma} the trivialization
$$\mc{T}\circ \mc{G}_\bb{C}:(u^*E,J_H)\rightarrow (D\times \bb{P}^m,j\times J_I)$$
is a biholomorphism. Let $p\in L_F$, and define $$v_p^0(z):= \mc{G}_{\bb{C}}^{-1}\circ \mc{T}^{-1}(z,p)$$

The map $\mc{G}_\bb{C}^{-1}\circ\mc{T}^{-1}$ is given precisely by left multiplication on the total pullback via
$$\mc{G}_{\mc{T}}: (D,\partial D)\rightarrow (G_\bb{C},G)$$
as in theorem \ref{heatflowgaugever}.
Moreover, such a disk is holomorphic by Lemma \ref{hologaugelemma}. Hence, we have shown the existence of a section $v^0_p$.

Next we check the surjectivity of the linearized operator in the case of no prescribed tangecies. The proof is similar to the proof of Theorem \ref{transversalitythm} (see \cite{floerfibrations}), except that we restrict ourselves to $PU(m+1)$ connections.  Choose a metric $(g,\nabla_g)$ with Levi-Civita connection on $u^*E$ such that $L$ is totally geodesic, pick $\frac{k}{m}>\frac{1}{2}$ so that Sobolev embedding into continuous functions holds, and consider the Banach manifold
$$\Omega^{k,2}_{A, sec}(D,\partial D, u^*E,L)$$
of sections in a relative class $A$ that are $k$ times weakly differentiable with square integrable derivatives in regards to $\nabla_g$.  Given a map $v$ in such a space, we get a local chart
$$W_v\subset W^{k,2}(D,\partial D,v^*TF, v^*TL_F)$$
that we can map into $\Omega^{k,2}$ by geodesic exponentiation according to $\nabla_g$. The transition map between two such charts is given by parallel transport of vector fields between the chart centers. Set
$$\mc{B}_{A}^k:=\Omega^{k,2}_{A,sec}\times \mf{G}^l$$
Over such a manifold we have a Banach vector bundle $\mc{E}^{k-1}$ whose fiber above a map $(v,\mc{G})$ is
$$(\mc{E}^{k-1})_{v,\mc{G}}:= \Lambda_{\mc{G}^*J,j}^{0,1}(D,v^*TF)_{k-1,2}$$
and whose transition functions are given expontiation in $PU_{m+1}$ and by parallel transport with regards to a Hermitian connection $\nabla^{\mc{G}}$ on $u^*TE$ that preserves the almost complex structure $\mc{G}^*J$ (see [3.2 \cite{ms2}]).

We have a section
\begin{align*}
\partial(v,\mc{G}):=&\bar{\partial}_{\mc{G}^*J,j}v=dv+\mc{G}^*J\circ dv\circ j
\end{align*}
with linearization at a holomorphic $v$ given by
\begin{equation}D_{v,\mc{G}^*J}\bar{\partial}:=D_{v,\mc{G}^*J}+J_\mf{h}\circ dv\circ j
\end{equation}

Here $D_{v,\mc{G}^*J}$ the Cauchy-Riemann (hence Fredholm) linearization of $\bar{\partial}_{\mc{G}^*J,j}$ with respect to the variable $v$, $\mf{h}$ is a section of projective skew-Hermitian matrices in the Lie algebra $T_{I}\mf{G}^l$ with values in $TG$ over $\partial D$, and $J_\mf{h}$ is the derivative of \ref{gaugeacs} with respect to $\mc{G}$.

We define the \emph{universal moduli space} as 
$$\mc{M}_{sec}^{k,2}(u^*E,L,A):=\bar{\partial}^{-1}(\mc{B})$$
where $\mc{B}$ is identified with the zero section. The first goal will be to show that $D_{v,\mc{G}^*J}\bar{\partial}$ is surjective when $v$ is $\mc{G}^*J$-holomorphic to prove that the universal space is a smooth manifold. Lastly, we apply a Sard-Smale argument to find a set of comeager $\mc{G}$ such that the projection
$$\pi_{\mf{G}}:\mc{M}_{sec}^{k,2}(u^*E,L,A)\rightarrow \mf{G}^l$$
has a comeager set of regular values.

Suppose $D_{v,\mc{G}^*J}\bar{\partial}$ is not surjective. Since it is a Fredholm operator, we can assume that its image is closed. By the Hahn-Banach theorem there is a non-zero section
$$\eta\in \Lambda^{0,1}_{\mc{G}^*J,j}(D,v^*TF)$$
such that
\begin{align}&\label{derivequation}\int_D\langle D_{v,\mc{G}^*J}\xi,\eta\rangle=0 
\\& \label{perturbequation}\int_D\langle J_\mf{h}\circ dv\circ j,\eta\rangle=0
\end{align}
for all $\xi\in W^{k,2}(D,\partial,v^*TF,v^*TL_F)$ and all $\mf{h}\in T_{I}\mf{G}^l$.

We have that $\eta$ is in the kernel of the adjoint real Cauchy-Riemann operator
$$D_{v,\mc{G}^*J}^*\eta=0.$$
Choose a point $p$ where $\eta\neq 0$ and $p\in U$, which is possible since $\eta$ cannot be zero on a dense set.

The map $\mf{pu}_{m+1}\rightarrow TF$ is surjective, so choose a skew-Hermitian matrix $g$ so that 
$$\langle J_g\circ du(p)\circ j,\eta(p)\rangle >0$$
and extend $g$ to a section of skew-Hermitian matrices $\mf{h}$ by a bump function so that
$$\langle J_\mf{h}\circ du\circ j,\eta\rangle \geq 0$$
Thus, \ref{perturbequation} is strictly positive for the supposed $\eta$, which is a contradiction.

The case for surjectivity in the presence of presribed tangentcies to $\pi^{-1}(D_B)$ is covered comprehensively in [\cite{CM} Lemma 6.6] and we refer the reader to such.

By the implicit function theorem, $\mc{M}_{sec}^{k,2}(u^*E,L,A)$ is a class $C^q$ manifold for $q<l-k$.

By the Sard-Smale theorem, and for $k,l>>1$, the set of regular values of the projection $\pi_\mf{G}$ is comeager. Since the kernel and cokernel of $\pi_\mf{G}$ are isomorphic to that of $D(v,\mc{G})$, we have that 
$$\pi^{-1}(\mc{G})=\mc{M}_{j,\mc{G}^*J_H}(u^*E,u^*L,A,p)$$
is a $C^?$ manifold of dimension $\I (A)$. The existence of a comeager set for smooth perturbation follows from an argument due to Taubes (see proof \cite{ms2} Theorem 3.1.5). The fact that all elements of the moduli space are smooth solutions follows from an elliptic bootstrapping argument. Thus, each moduli space $\mc{M}_{j,\mc{G}^*J_H}(u^*E,u^*L,A,p)$ is smooth of expected dimension.
\end{proof}


\section{Transversality/Compactness}\label{transection}

The fact that transversality for full configurations of expected dimension $\leq 1$ follows directly from the author's work in \cite{fiberedpotential}\cite{floerfibrations}. We provide a brief recall of the main notions and theorem in this section. Essentially, holomorphic configurations \emph{(treed disks)} are based on trees with each vertices corresponding to holomorphic disks/spheres and edges to Morse flow lines/nodes/marked points mapping to the divisor. The notions of a treed disk and the notation are based on work in \cite{CW2} and initially inspired by \cite{birancorneapearl}
\begin{figure}[h]
\includegraphics[scale=1]{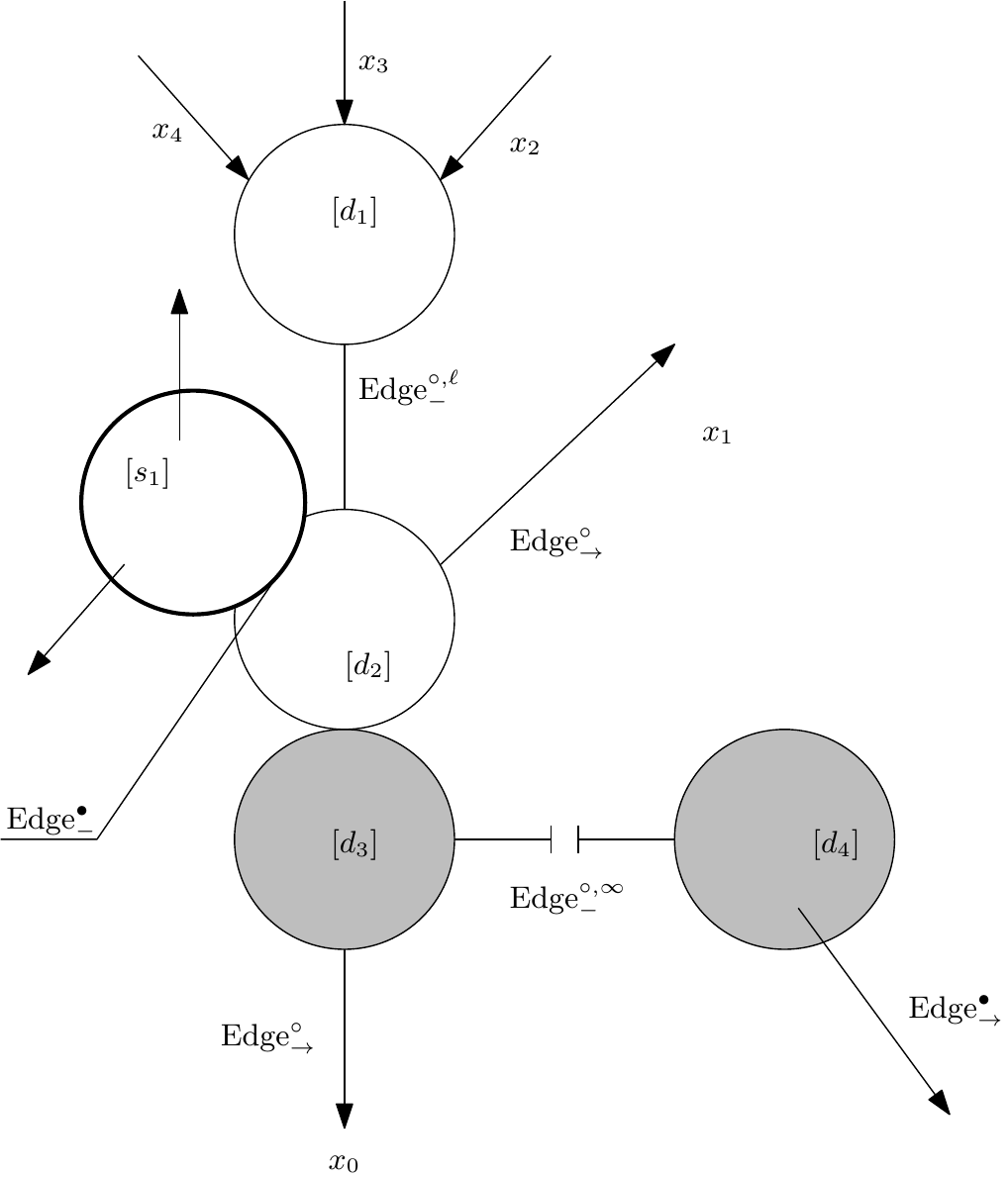}
\caption{\cite{floerfibrations} The geometric realization of a treed disk. Shading correspond to domains mapped to constants under $\pi$.}
\end{figure}
Let $\g$ be metric tree with the additional labeling of edges and vertices:
\begin{center}
\begin{tabular}{||c | c ||}
\hline
$\v_s(\g)$ & spherical vertices\\
\hline
$\v_d(\g)$ & disk vertices\\
\hline
$\e_\rightarrow^\bullet (\g)$ & interior markings ($\sim$ interior special points)\\
\hline
$\e_-^\bullet(\g)$ & interior nodes ($\sim$ interior special points)\\
\hline
$\e_\rightarrow^\circ (\g)$ & boundary markings ($\sim$ boundary special points)\\
\hline
$\e_-^\circ(\g)$ & boundary nodes ($\sim$ boundary special points)\\
\hline
$\ell:\e_-^\circ (\g)\rightarrow [0,\infty]$& boundary node length\\
\hline
$m:\e_\rightarrow^\bullet(\g)\rightarrow \bb{Z}_{\geq 0}$ & divisor intersection multiplicity\\
\hline
$[\cdot]_d:\v_d(\g)\rightarrow H^d_2(E,L)$ & disk classes\\
\hline
$[\cdot]_s:\v_s(\g)\rightarrow H^s_2(E)$ & sphere classes\\
\hline
\end{tabular}
\end{center}

We require each disk/sphere domain with $\pi_*[d]\neq 0$ to be stable in the sense that the number of boundary special points plus twice the number of interior special points is $\geq 3$ (\emph{$\pi$-stability}).
On spheres the perturbation in Theorem \ref{liftthm} becomes more complex, so we resort to using a more general perturbation of the connection as in [Chapter 8 \cite{ms2}]. Following [Section 5.3 \cite{floerfibrations}], let 
$$\ham (F,B):=\Lambda^1(B,C^\infty_0(F,\bb{R}))$$
denote the space of one forms on $B$ with values in fiberwise zero average smooth functions. For $\sigma\in \ham(F,B)$, one obtains a different connection form on the total space by considering
$$a_\sigma:=a-d\sigma$$
that we denote by $H_\sigma$. The holonomy of $H_\sigma$ differs infinitesimally from that of $H$ by the fiberwise Hamiltonian vector field of $\sigma$, which we denote by $X_\sigma$.
Let 
\begin{equation*}\mc{J}^l(E,\omega_{\epsilon,a_\sigma}):= \bigg\lbrace J \text{ from Lemma \ref{uniqueacs}},\, J\vert_F=J_I,\, J \text{ is tamed by }\epsilon a_\sigma+\pi^*\omega_B\bigg\rbrace .
\end{equation*}
be the subset of tamed structures adapted to the connection. This can be realized as an open subset of $\mc{J}^l(B,\omega_B)$ and thus is a Banach manifold.

Let $\mc{U}_\g$ denote the \emph{universal treed disk} of type $\g$; that is, we have a fiber bundle
$$\mc{U}_\g\rightarrow \mf{M}_\g$$
whose fibers are isomorphism classes of treed disks and $\mf{M}_\g$ is the moduli space of treed disks of type $\g$. More generally than in subsection \ref{holomorphicliftsubsection}, let
$$\mf{G}^l(\mc{U}_\g):= W^{l,2}_{\mc{U}_\g}(PU_{m+1}):=\bigg\lbrace \mc{G}:\mc{U}_\g\rightarrow PU_{m+1}:\int_D \vert D^i \mc{G} \vert^2 dxdy <\infty \bigg\rbrace$$
be the Banach space of $l$ differentiable square integrable functions into $PU_{m+1}$.

Choose neighborhoods $\mc{S}_\g^o$ resp. $\mc{T}_\g^o$ of each surface part $\mc{S}_\g$ resp. edge part $\mc{T}_\g$ not containing the special points and boundary on each surface component resp. not containing $\infty$ on each edge.

\begin{definition}\cite{floerfibrations}\label{perturbationdatadef} Fix an adapted $J_{D_B}$ to $D_B$. Let $(\g,\underline{x})$ be a $\pi$-stable type of treed disk with a Morse labeling. A class $C^l$ \emph{fibered perturbation datum} for the type $\g$ is an element
$$\mc{G}_\g\in \mf{G}^l(\mc{U}_\g) $$
together with piecewise $C^l$ maps 
$$(J_\g,\sigma):\mathcal{U}_\g\rightarrow \mathcal{J}^l(E,\omega_{H,K})\times \ham (F,B) $$
$$X:\mc{T}_\g\rightarrow \t{Vect}^l(TL_F)\oplus \t{Vect}^l(TL_B)$$
denoted $P_\g$, that satisfy the following properties:
\begin{enumerate}
\item $\sigma\equiv 0$ outside sphere components and on the neighborhoods $\mc{S}_\g - \mc{S}_\g^o$

\item $J_B\equiv J_{D_B}$ on the neighborhoods $\mc{S}_\g - \mc{S}_\g^o$,
\item $\mc{G}_\g$ takes values in $G$ on edges, the boundary of each disk component, $\mc{S}_\g - \mc{S}_\g^o$, and vertices with $\pi_*[v]=0$, and vanishes on spheres, neighborhoods of interior special points, and the neighborhoods $\mc{T}_\g-\mc{T}_\g^o$

\item $X$ is identified with a map $X:\mc{T}_\g\rightarrow \t{Vect}^l(TL_F)\oplus \t{Vect}^l(H_L)$ via horizontal lift and $X\equiv X_f$ in the neighborhood $\mc{T}_\g-\mc{T}_\g^o$ of $\infty$.
\end{enumerate}

Let $\mc{P}^l_\g(E,D)$ denote the Banach manifold of all class $C^l$ fibered perturbation data (for a fixed $J_{D_B}$).
\end{definition}

\begin{definition}
A perturbation datum for a collection of combinatorial types $\gamma$ is a family $\underline{P}:=(P_\g)_{\g\in\gamma}$
\end{definition} 

In order for compactness to hold, such a collection must satisfy some coherence axioms, which we reference

\begin{definition}{[Definition 2.11 \cite{CW2}]} \label{coherentdefinition} A fibered perturbation datum $\underline{P}=(P_\Gamma)_{\g\in\gamma}$ is \emph{coherent} if it is compatible with the morphisms on the moduli space of treed disks in the sense that
\begin{enumerate}
\item\emph{(Cutting edges axiom)} if $\Pi:\g'\rightarrow \g$ cuts an edge of infinite length, then $P_{\g}=\Pi_* P_{\g'}$,
\item\emph{(Collapsing edges/making an edge finite or non-zero axiom)} if $\Pi:\g\rightarrow \g'$ collapses an edge or makes an edge finite/non-zero, then $P_{\g}=\Pi^*P_{\g'}$,
\item\emph{(Product axiom)} if $\g$ is the union of types $\g_1, \g_2$ obtained from cutting an edge of $\g$, then $P_\g$ is obtained from $P_{\g_1}$ and $P_{\g_2}$ as follows: Let $\pi_k:\mf{M}_\g\cong \mf{M}_{\g_1}\times\mf{M}_{\g_2}\rightarrow\mf{M}_{\g_k}$ 
denote the projection onto the $k^{th}$ factor, so that $\mc{U}_\g$ is the unions of $\pi_{1}^{*}\mathcal{U}_{\g_1}$ and $\pi_{2}^{*}\mathcal{U}_{\g_2}$. Then we require that $P_\g$ is equal to the pullback of $P_{\g_k}$ on $\pi_{k}^*\mathcal{U}_{\g_k}$
\item \emph{(Ghost-marking independence)} If $\Pi:\g'\rightarrow \g$ forgets a marking on components corresponding to vertices with $[v]=0$ and stabilizes, then $P_{\g'}=\Pi^* P_{\g}$.
\end{enumerate}
\end{definition}
We refer the reader to section 2 of \cite{CW2} for the precise definition of the moduli of treed disks and the morphisms between them.
Let $u:C_\g\rightarrow E$ be a Floer trajectory class based on a $\pi$-stable combinatorial type $\g$. $u$ is called \emph{adapted} to $D$ if
\begin{enumerate}
\item(Stable domain) The geometric realization of $\g$ after forgetting vertices with $\pi_*[v]=0$ is a stable domain;
\item(Non-constant spheres)\label{spheresaxiom} Each component of $C$ that maps entirely to $D$ is constant;
\item(Markings)\label{markingsaxiom} Each interior marking $z_i$ maps to $D$ and each component of $u^{-1}(D)$ contains an interior marking.
\end{enumerate}
Denote the set of adapted $P_\g$-Floer trajectories by 
$$\mc{M}_\g(\underline{x},P_\g,D)$$
Finally, we have the expected dimension of a given moduli space
\begin{align}\label{indexformulatotalspace}
\I (\g,\underline{x}):&= \mathrm{dim}W^+_{X}(x_0)-\sum_{i=1}^{n}\mathrm{dim}W^+_{X}(x_i) + \sum_{i=1}^{m} I(u_i)+n-2 -|\e^{\circ,0}_-(\g)|\\&-|\e_-^{\circ,\infty}(\g)|-2|\e^\bullet_-(\g)| -|\e^\bullet_\rightarrow(\g)|-2\sum_{e\in \e^{\bullet}_\rightarrow} m(e)
\end{align}

In the following, by an uncrowded type we a type of treed disk whose constant components have at most one interior marking.
\begin{theorem}[Transversality, Theorem 6.1 \cite{floerfibrations}] \label{transversalitythm}
Let $E$ be a symplectic K\"ahler fibration, $L$ a fibered Lagrangian. Suppose that we have a finite collection of uncrowded, $\pi$-adapted, and possibly broken types $\lbrace\g\rbrace$ with $$\I(\g,\underline{x})\leq 1.$$ 
Then there is a comeager subset of smooth regular data for each type $$\mc{P}^{\infty,reg}_\g(E,D)\subset\mc{P}^\infty_\g$$
and a selection $$(P_\g)\in \Pi_\g \mc{P}^{\infty,reg}_\g$$ that forms a regular, coherent datum. Moreover, we have the following results about tubular neighborhoods and orientations:

\begin{enumerate}
\item\label{tubularneighborhoods}\emph{(Gluing)} If $\Pi:\g\rightarrow\g'$ collapses an edge or makes an edge finite/non-zero, then there is an embedding of a tubular neighborhood of $\mc{M}_\g (P_\g,D)$ into $\overline{\mc{M}}_{\g'}(P_{\g'},D)$, and
\item \emph{(Orientations)} if $\Pi:\g\rightarrow\g'$ is as in \ref{tubularneighborhoods}, then the inclusion $\mathcal{M}_{\g} (P_{\g},D)\rightarrow \overline{\mathcal{M}}_{\g'} (P_\g,D)$ gives an orientation on $\mc{M}_\g$ by choosing an orientation for $\mc{M}_{\g'}$ and the outward normal direction on the boundary. 
\end{enumerate}
\end{theorem}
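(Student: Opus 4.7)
The plan is to extend the universal-moduli-space and Sard-Smale argument used in the proof of Theorem \ref{liftthm} to a configuration of type $\g$, and then to patch the resulting regular perturbations together coherently by induction on the complexity of the type. For a fixed $\g$, I would form the Banach bundle $\mc{E}^{k-1}\to \mc{B}^k_\g$ whose base consists of pairs $(u,P_\g)$ with $u\in \Omega^{k,2}_\g(C_\g,E,L)$ a stable map of type $\g$ adapted to $D$ and $P_\g\in \mc{P}^l_\g(E,D)$, and whose fiber carries the $\bar\partial$-section together with the Morse incidence constraints along the edges. The moduli space $\mc{M}_\g(\underline{x},P_\g,D)$ is then cut out as the zero locus.

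The heart of the argument is to verify that the linearization, augmented by variations of $P_\g$, is surjective at every configuration in $\bar\partial^{-1}(0)$. This splits by component type. On disk vertices the variations $\mc{G}_\g\in \mf{G}^l(\mc{U}_\g)$ take values in $PU_{m+1}$ on the interior, so the pairing-with-cokernel argument from the proof of Theorem \ref{liftthm} applies verbatim: surjectivity of $\mf{pu}_{m+1}\to T_pF$ at every point produces an $\mf{h}$ with $\int_D\langle J_\mf{h}\circ du\circ j,\eta\rangle>0$, contradicting the existence of a nonzero cokernel element $\eta$. On sphere vertices I would vary the connection one-form via $\sigma\in \ham(F,B)$ and the vertical almost complex structure $J_B$ away from $\mc{S}_\g-\mc{S}_\g^o$, following Chapter 8 of \cite{ms2}; note that $\pi$-stability lets these perturbations separate points, which is precisely why constant components are excluded. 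On edges I would vary $X$ in $\t{Vect}^l(TL_F)\oplus \t{Vect}^l(TL_B)$ to achieve Morse-Smale transversality. For interior markings prescribing tangency orders to $D_B$, I would invoke Lemma 6.6 of \cite{CM} as in Theorem \ref{liftthm}. Applying Sard-Smale to the projection $\mc{M}_\g\to \mc{P}^l_\g$, followed by the Taubes trick to pass from $C^l$ to $C^\infty$, yields the comeager set $\mc{P}^{\infty,\t{reg}}_\g$.

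Coherence is enforced by induction on the partial order where $\g'\leq \g$ whenever $\g$ degenerates to $\g'$. On each $\g$, the axioms of Definition \ref{coherentdefinition} (cutting edges, collapsing edges, product, ghost-marking independence) determine $P_\g$ on collar neighborhoods of the boundary strata from data already chosen on lower strata. The uncrowdedness hypothesis guarantees that ghost components never need to carry their own perturbation and the finiteness of $\lbrace \g \rbrace$ ensures the induction terminates. Having fixed the boundary-determined portion, one extends to the interior of $\mc{U}_\g$ and reruns the transversality argument above on the components where perturbations remain free. The gluing statement in item \ref{tubularneighborhoods} follows from the standard fibered gluing construction applied to the cylindrical neck model at each edge or node (since the perturbation scheme localizes away from such necks), and the orientation claim from the standard determinant-line analysis for real linear Cauchy-Riemann operators with totally real boundary.

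The main obstacle is the interaction between coherence and transversality: on a codimension-one stratum the axioms force $P_\g$ to equal a pullback or product of previously chosen data, so perturbations are available only in the interior of $\mc{U}_\g$ and on components not already constrained. The hypothesis $\I(\g,\underline{x})\leq 1$ is what keeps the combinatorial complexity low enough that, after the coherence constraints are imposed, the remaining free perturbations are still rich enough to surject onto each cokernel; more precisely, on components where the gauge perturbation is the only available degree of freedom, the pointwise surjectivity of $\mf{pu}_{m+1}\to T_pF$ is used decisively, and this is the reason the argument must be phrased in terms of the enlarged $PU_{m+1}$-valued perturbations rather than the structure-group-valued perturbations allowed by Theorem \ref{liftmodulithm}.
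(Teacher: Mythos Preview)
Your proposal is correct and matches the paper's approach: the paper itself does not spell out a proof here but simply states that it is ``almost exactly the same as that of [Theorem 6.1 \cite{floerfibrations}] after considering the ideas from Theorem \ref{liftthm},'' with the only change being the use of $PU_{m+1}$-valued gauge perturbations on disk components in place of general Hamiltonian connections. You have correctly identified that modification and the component-by-component split (gauge on disks, $\sigma\in\ham(F,B)$ on spheres, $X$ on edges), and your inductive coherence scheme is the standard one from \cite{CW2} that \cite{floerfibrations} also follows. One minor slip: in your sphere-vertex discussion you wrote ``vertical almost complex structure $J_B$'' where you mean the base structure $J_B$ together with the connection perturbation $\sigma$; the fiber structure $J_I$ stays fixed in the definition of $\mc{J}^l(E,\omega_{\epsilon,a_\sigma})$.
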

The proof is almost exactly the same as that of [Theorem 6.1 \cite{floerfibrations}] after considering the ideas from Theorem \ref{liftthm}. The only difference is the use of $PU_{m+1}$-connections on disk components that match with $G$-connections on the boundary instead of more general Hamiltonian connections.

Finally, the coherence axioms combined with transversality provide a compactness result:
\begin{theorem}[Compactness, Theorem 7.1 \cite{floerfibrations}]\label{compactnessthm}
Let $\g$ be an uncrowded type with $\I(\g,\underline{x})\leq 1$ and let $\mc{P}=(P_\gamma)$ be a collection of coherent, regular, stabilized fibered perturbation data that contains data for all types $\gamma$ with $\I(\gamma,\underline{x})\geq 0$ and from which $\g$ can be obtained by (making an edge finite/non-zero) or (contracting an edge). Then the compactified moduli space $\overline{\mc{M}}_\g(D, P_\g)$ contains only regular configurations $\gamma$ with broken edges and unmarked disk vertices. In particular, there is no sphere bubbling or vertical disk bubbling. 
\end{theorem}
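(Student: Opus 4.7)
The plan is to execute the standard Gromov--Floer compactness scheme for treed disks in $E$, use the stabilizing divisor $D$ and coherence of $\mc{P}$ to pin down the limit combinatorial types, and then invoke the dimension count from \eqref{indexformulatotalspace} together with Theorem \ref{transversalitythm} to rule out sphere bubbling and vertical disk bubbling.

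First I would invoke Gromov compactness for a sequence $u_n\in\mc{M}_\g(D,P_\g)$. The fixed relative class $A$ together with Lemma \ref{covariantenergyprop} provides uniform energy bounds on each component, and the usual rescaling-at-concentration-points argument yields, after passing to a subsequence, a stable limit $u_\infty$ on a treed-disk type $\gamma'$ obtained from $\g$ by a composition of the elementary moves: breaking an edge (length $\to\infty$), collapsing or making-finite an edge, and sphere or disk bubbling. The coherence axioms (Definition \ref{coherentdefinition}) ensure that $P_{\gamma'}$ is defined and compatible with $P_\g$ through the corresponding morphism of types, so $u_\infty$ is automatically a $P_{\gamma'}$-Floer trajectory. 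Because $J_{\gamma'}$ is adapted to $D$ on the neighborhoods of the special points specified in Definition \ref{perturbationdatadef}, positivity of intersection with the pseudo-holomorphic submanifold $D$ forces each non-constant bubble component to meet $D$ in at most finitely many isolated points, never entirely (a component lying in $D$ would be constant by the non-constant-spheres axiom of the adapted definition, and a disk with boundary on $L$ cannot lie in $D$ since $L\cap D=\emptyset$). Recording these intersections as interior markings with their tangency multiplicities $m(e)$ promotes $u_\infty$ to an adapted Floer trajectory on an enriched type $\gamma''$ to which Theorem \ref{transversalitythm} applies, yielding $\mc{M}_{\gamma''}(D,P_{\gamma''})$ as a transversely cut out moduli space of expected dimension $\I(\gamma'',\underline{x})$, non-empty only if this dimension is nonnegative.

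Next I would use the dimension count in \eqref{indexformulatotalspace} to eliminate non-trivial bubbling. For a non-vertical sphere bubble in class $A_s$, positivity gives $\langle A_s,D_E\rangle\geq 1$, forcing at least one added interior marking with $m(e)\geq 1$ and at least one new interior node; these contribute $-|\e^\bullet_\rightarrow|-2|\e^\bullet_-|-2\sum m(e)\leq -5$ to $\I$, exceeding the budget $\I(\g)\leq 1$ and giving $\I(\gamma'')<0$, which by transversality forces $\mc{M}_{\gamma''}=\emptyset$ and contradicts the existence of $u_\infty$. Vertical sphere bubbles ($\pi_*A_s=0$) are excluded by $\pi$-stability, which forces at least three special points on the bubble, each again contributing negatively. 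For a vertical disk bubble in class $A_b$, monotonicity of $L_F$ gives $\mu_F(A_b)\geq N\geq 2$, and the boundary node contribution $-|\e^{\circ,0}_-|$ combined with the evaluation constraint at the node (codimension $\dim L_F$ on the fibered product moduli) drops $\I(\gamma'')$ strictly below zero once $\I(\g)\leq 1$. The only surviving degenerations are edge breakings (length $\to\infty$), which preserve $\I$ and produce unmarked disk vertices, giving exactly the conclusion of the theorem.

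The main obstacle is the vertical-disk-bubbling case: the bubble need not meet $D$, so the divisor does not automatically supply the extra marking that drops $\I$. The remedy combines the minimal-Maslov hypothesis $N\geq 2$ with the ghost-marking-independence and collapsing-edge coherence axioms of Definition \ref{coherentdefinition}, which prevent spurious compensatory markings from being absorbed into constant ghost components, and with the uncrowded hypothesis, which bounds markings on constant components. Together these force $\I(\gamma'')<0$ for any non-trivial vertical disk bubble, leaving only broken-edge/unmarked-disk-vertex configurations as allowed boundary strata and establishing the claimed compactness.
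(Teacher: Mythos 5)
First, a caveat: the paper does not actually prove this statement --- it is imported verbatim as Theorem 7.1 of \cite{floerfibrations}, with no argument given here beyond the remark that the perturbation scheme now uses $PU_{m+1}$-valued gauge transformations on disk components. So the comparison is between your sketch and the standard divisor-stabilized compactness scheme of \cite{CW2,floerfibrations} that the citation points to. Your overall architecture --- Gromov--Floer convergence with energy bounds from Lemma \ref{covariantenergyprop}, coherence of $\mc{P}$ to identify the limit as a $P_{\gamma'}$-trajectory, promotion to an adapted type via positivity of intersection with $D$, and then exclusion of bubbling by the index formula \ref{indexformulatotalspace} plus Theorem \ref{transversalitythm} --- is exactly that scheme, and the sphere-bubble exclusion (an unmarked non-vertical sphere must meet $D$, acquires a marking and a node, and the type's index drops by at least $2$, while vertical spheres are handled by $\pi$-stability) is correct in substance even if your ``$\leq -5$'' bookkeeping is loose.

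The genuine gap is where you yourself locate the main obstacle: vertical disk bubbles. Your index accounting there does not work. (i) The bubble class $A_b$ contributes $+\mu_F(A_b)$ to $\sum_i I(u_i)$, and the total relative class is preserved in the Gromov limit, so $\sum_i I(u_i)$ is unchanged; the hypothesis $\mu_F(A_b)\geq N$ therefore does not \emph{lower} $\I(\gamma'')$ at all --- it enters the argument elsewhere. (ii) The term $-|\e^{\circ,0}_-(\g)|$ in \ref{indexformulatotalspace} already encodes the fibered-product (evaluation-matching) condition at the new boundary node; subtracting an additional $\dim L_F$ for ``the evaluation constraint'' is double counting. (iii) The honest count for a single vertical disk bubble is $\I(\gamma'')=\I(\g,\underline{x})-1$, which equals $0$ when $\I(\g,\underline{x})=1$, so regularity of the stratum does \emph{not} force it to be empty --- this is precisely the classical codimension-one disk-bubbling phenomenon, and it is the entire difficulty of the theorem. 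The actual exclusion must use additional structure: either a two-sided gluing argument showing the bubbled configuration is an interior rather than boundary point of the one-dimensional moduli space, or the domain-instability of the unmarked vertical bubble combined with the ghost/forgetful axioms forcing the perturbation on it to be the fixed fiberwise datum (for which one then argues cancellation or non-existence via monotonicity of $L_F$). Your appeal to ``ghost-marking-independence and collapsing-edge coherence \ldots together these force $\I(\gamma'')<0$'' asserts the conclusion without supplying this argument, and as stated the inequality is false for $\I(\g,\underline{x})=1$. Until that step is filled in, the claim ``no vertical disk bubbling'' is not established.
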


\section{The Fukaya Algebra}\label{fukayaalgebrasection}
We define the Fukaya algebra arising from counting isolated holomorphic configurations with a given number of inputs. In \cite{fiberedpotential} we show that a similar algebra satisfies the axioms of a $A_\infty$-algebra.

\subsubsection{Grading}
To begin, we have a short review of grading. Let $\lag(V,\omega)$ be the Lagrangian Grassmannian associated to a symplectic vector space and let $N$ be an even integer. The coverings of $\lag$ with deck transformation group $\bb{Z}_N$ are classified by classes in $H^1(\lag,\bb{Z}_N)$. Specifically, there is a well-defined \emph{Maslov class} $\mu_V\in H^1(\lag(V,\omega),\bb{Z})$, so let $\lag^N(V,\omega)$ be the covering space corresponding to the image of $\mu_V$ in $H^1(\lag,\bb{Z}_N)$.

Let $\mc{L}(B)\rightarrow B$ be the fiber bundle with fiber $\mc{L}(B)_b=\t{Lag}(T_b B,\omega)$ the Lagrangian subspaces in $T_b B$. Following Seidel \cite{seidelgraded} an $N$-fold \emph{Maslov covering} of $B$ is the fiber bundle
$$\mc{L}(B)^N\rightarrow E$$
with fibers $\lag^N(T_b B,\omega)$. For an orientable Lagrangian $L_B$, there is a canonical section $s:L_B\rightarrow \mc{L}(E)\vert_L$. A $\bb{Z}_N$-\emph{grading} of $L_B$ is a lift of $s$ to a section 
$$s^N:L\rightarrow \mc{L}^N(B)\vert_{L_B}.$$
It is natural to ask about the existence of such a grading, for which we have the following answer:
\begin{lemma}[Lemma 2.2 \cite{seidelgraded}]
$B$ admits an $N$-fold Maslov cover iff $2c_1(B)\in H^2(E,\bb{Z}_N)$ is zero. 
\end{lemma}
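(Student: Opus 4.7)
The approach is obstruction-theoretic, carried out via the Leray--Serre spectral sequence of the Lagrangian Grassmannian bundle $\lag(n)\to \mc{L}(B)\to B$. An $N$-fold Maslov cover of $B$ is by definition a principal $\bb{Z}_N$-bundle on $\mc{L}(B)$ whose restriction to each fiber $\lag(T_bB,\omega)$ is the cover associated to $\mu\bmod N$. Since such bundles are classified by $H^1(\,\cdot\,,\bb{Z}_N)$, the question becomes whether there exists a class $\tilde\mu\in H^1(\mc{L}(B),\bb{Z}_N)$ with the prescribed fiber restriction. Because $Sp(2n,\bb{R})$ is connected, the local coefficient system on $B$ is trivial, and the relevant spectral sequence is
\begin{equation*}
E_2^{p,q}=H^p(B,H^q(\lag(n),\bb{Z}_N))\Rightarrow H^{p+q}(\mc{L}(B),\bb{Z}_N),
\end{equation*}
with $E_2^{0,1}\cong\bb{Z}_N$ generated by $\mu\bmod N$. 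The only differential that can leave this position is $d_2:E_2^{0,1}\to E_2^{2,0}=H^2(B,\bb{Z}_N)$, so $\tilde\mu$ exists if and only if $d_2\mu=0$.

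The main content is then to identify $d_2\mu$ with $2c_1(B)\bmod N$. The Maslov class of $\lag(n)=U(n)/O(n)$ is the pullback of the generator of $H^1(S^1,\bb{Z})$ under $\det^2:U(n)/O(n)\to S^1$, which is well-defined since $\det^2\equiv 1$ on $O(n)$. Globalizing, $\det^2$ gives a map of fibrations over $B$ from $\mc{L}(B)$ to the principal $S^1$-bundle $D^2(B)$ of the line bundle $(\Lambda^{\mathrm{top}}_\bb{C}TB)^{\otimes 2}$, covering the identity on $B$ and restricting fiberwise to $\det^2$. By naturality of the Leray--Serre spectral sequence, $d_2\mu$ is the pullback of the transgression of the fundamental class $[S^1]\in H^1(S^1,\bb{Z}_N)$ in the spectral sequence of $S^1\to D^2(B)\to B$, which by the classical transgression formula for principal circle bundles equals $c_1(D^2(B))=2c_1(B)$ reduced mod $N$.

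The main obstacle in this proof is the last identification: that the transgression of the fiber fundamental class of an oriented $S^1$-bundle equals its first Chern class (taken mod $N$ here). This is standard but is the only nonformal input; everything else is bookkeeping with the spectral sequence. The converse direction -- that if an $N$-fold Maslov cover exists then $2c_1(B)\equiv 0\bmod N$ -- is automatic from the same argument read in reverse, since the existence of $\tilde\mu$ with the prescribed fiber restriction forces $d_2\mu=0$.
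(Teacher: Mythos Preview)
The paper does not give its own proof of this lemma; it simply quotes the statement from Seidel \cite{seidelgraded} and moves on. Your argument is correct and is in fact essentially Seidel's original proof: translate the existence of an $N$-fold Maslov cover into the extension problem for the fiberwise class $\mu\bmod N$ in $H^1(\mc{L}(B),\bb{Z}_N)$, read off the obstruction as the transgression $d_2\mu$ in the Leray--Serre spectral sequence of $\lag(n)\to\mc{L}(B)\to B$, and identify that transgression with $2c_1(B)\bmod N$ via the fiberwise $\det^2$ map to the circle bundle of $(\Lambda^n_\bb{C}TB)^{\otimes 2}$. There is nothing to compare against in the present paper, and nothing missing in your write-up.
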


Let $\mu^N\in H^1(\mc{L}(B),\bb{Z}_N)$ the global Maslov class mod $N$.
\begin{lemma}[Lemma 2.3 \cite{seidelgraded}]
$L_B$ admits a $\bb{Z}_N$-grading iff $0=s^*\mu^N\in H^1(L_B,\bb{Z}_N)$.
\end{lemma}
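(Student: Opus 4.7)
The plan is to reduce the question to the standard classification of principal $\bb{Z}_N$-coverings of a space $X$ by $H^1(X,\bb{Z}_N)$. By construction the map $\mc{L}^N(B)\to \mc{L}(B)$ is a principal $\bb{Z}_N$-cover (a fiberwise $N$-fold cover with deck group $\bb{Z}_N$), and the class that classifies it is precisely $\mu^N\in H^1(\mc{L}(B),\bb{Z}_N)$, the mod-$N$ reduction of the global Maslov class. Pulling this cover back along the canonical section $s:L_B\to \mc{L}(B)\vert_{L_B}\hookrightarrow \mc{L}(B)$ then yields a principal $\bb{Z}_N$-cover $s^*\mc{L}^N(B)\to L_B$ whose classifying class is $s^*\mu^N\in H^1(L_B,\bb{Z}_N)$ by naturality.

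A $\bb{Z}_N$-grading is, by definition, a lift $s^N$ of $s$ through $\mc{L}^N(B)\to \mc{L}(B)$, which is the same datum as a global section of the pulled-back cover $s^*\mc{L}^N(B)\to L_B$. A principal $\bb{Z}_N$-cover admits a continuous global section if and only if it is isomorphic to the trivial cover, and that happens if and only if its classifying class in $H^1$ vanishes. Combining these observations, a $\bb{Z}_N$-grading of $L_B$ exists iff $s^*\mu^N=0$ in $H^1(L_B,\bb{Z}_N)$, as claimed.

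The main point that merits care is the identification of $\mu^N$ with the class classifying $\mc{L}^N(B)\to \mc{L}(B)$. This is essentially definitional: the fiber $\lag^N(V,\omega)$ was constructed as the cover corresponding to $\mu_V\bmod N$, and $\mu^N$ is the globalization obtained by gluing these fiberwise Maslov classes (checking compatibility under the $\text{Sp}(V,\omega)$-action on $\lag(V,\omega)$ is standard since $\mu_V$ is equivariant). Once that identification is established, the lemma reduces to generalities on principal $\bb{Z}_N$-bundles; the smooth/continuous distinction is immaterial, because any continuous section of a discrete cover is automatically locally constant in the fiber direction and hence smooth.
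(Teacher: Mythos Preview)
Your argument is correct. Note that the paper does not supply its own proof of this lemma; it is quoted verbatim from Seidel \cite{seidelgraded}, and your covering-space argument is exactly the standard one (and essentially the one Seidel gives): identify $\mc{L}^N(B)\to\mc{L}(B)$ as a principal $\bb{Z}_N$-cover classified by $\mu^N$, pull back along $s$, and use that a principal $\bb{Z}_N$-bundle admits a section iff its classifying class in $H^1(-,\bb{Z}_N)$ vanishes.
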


Henceforth, we fix an $N$-fold grading $s^N$ on $L_B$. This gives rise to a \emph{forgetful grading} $\pi^*s^N:L\rightarrow \mc{L}^N(B)$ on $L$, where we will keep track of the vertical grading with the vertical Maslov index as in section \ref{verticalmaslovsection}.

\subsection{The Floer complex}\label{floercomplexsubsection}

 We will use a variant of the Novikov ring in two variables. Choose a regular coherent perturbation datum $\mc{P}:=\lbrace P_\g\rbrace_\g$ as in Theorem \ref{transversalitythm}. Let $H_2^\circ (B,L_B,\bb{Z})$ denote the cone 
of relative homology classes that have a representative as a union of $J$-holomorphic configurations for some collection of tamed $J$'s. Let $H_2^{\circ,\pi} (E,L,\bb{Z})$ denote the same type of cone but only for tamed almost complex structures that make $\pi$ holomorphic. Note that $$\pi_* H_2^{\circ,\pi} (E,L,\bb{Z})\subset H_2^\circ (B,L_B,\bb{Z})$$ Hence, by Theorem \ref{liftthm} we have
$$\pi_*H^{\circ,\pi}_2(E,L,\bb{Z})= H^\circ_2 (B,L_B,\bb{Z})$$
Let $q$ be a formal variable and define 
$$\mc{Q}(B,L_B):=\lbrace 0,q^A:A\in H_2^\circ (B,L_B,\bb{Z})\vert\, q^A\cdot q^B=q^{A+B}, q^0=1\rbrace$$
denote the ring generated by formal symbols. We have a morphism under the monoidal structure
\begin{gather*}\deg (\cdot):\mc{Q}\rightarrow \bb{R}_{\geq 0}\\
\deg( q^A ):=\omega_B(A):=\int_{\mc{C}}\omega_B^* u
\end{gather*}
for a (holomorphic) representative $u$ of $A$, and $\deg(q^{A+B})=\deg(q^A)+\deg(q^B)$. 
Choose an $\varepsilon <<1$, let $N$ be an arbitrary real number, and define the Novikov ring in two variables as
\begin{align*}
\Lambda^2_{(\varepsilon)}=\bigg\{ \sum_{i,j}c_{A_ij}q^{A_i}r^{\eta_j}\Vert & c_{A_ij}\in \bb{Z}_2; q^{A_i}\in \mc{Q},\eta_j\in\bb{R};\\ (1-\varepsilon)&\deg(q^A)+\eta_j\geq 0; \\& \#\{ a_{Aj}\neq 0\Vert \deg(q^{A_i})+ \eta_j\leq N\}<\infty \bigg\}
\end{align*}
where the multiplication is $q^A\cdot q^B:=q^{A+B}$ and extended to products and sums in the usual way.

We choose a Morse-Smale function $f$ on $L$ that is constructed by
$$f=\pi^*b+\sum_i \phi_i g_i$$
where $b$ is Morse-Smale on $L_B$ and $g_i$ are Morse-Smale on 
$$\bigsqcup_i L_{F,i}:=\pi^{-1}(\crit{b})\cap L;$$ the \emph{critical fibers}. We assume that the $g_i\vert_{L_{F,i}}$ 
 and $b$ have a unique index $0$ and index $n$ critical point, denote $x^M_i$ resp. $x^m_i$, where we define the index of a critical point as
$$\I (x)=\dim W^+_{X_f}(x),$$
the dimension of the stable manifold.
One can then choose an appropriate pseudo-gradient 
$$X_f=X_g\oplus X_b$$
in the splitting $TL_F\oplus L\cap H$ on $L$.

Define the \emph{Floer chain complex} as the free module generated by the critical points of $X_f$
$$CF^*(L,\Lambda^2):=\bigoplus_{x^i\in \crit{X_f}}\Lambda^2x^i $$
with grading 
\begin{gather*}
\vert x\vert =\dim W^+_{X_b}(\pi(x)) + \dim W^+_{X_g}(x)=\dim W^+_{X_f}(x)\\
\vert q^A\vert := \deg(q^A)\\
\vert r^\eta\vert =\eta
\end{gather*}
and extended on products so that
$$\vert x^iq^A r^\eta\vert=\vert x \vert+\eta+\vert q^A\vert.$$

Let $\rho\in\hom(\pi_1(L),\Lambda^{2\times})$ denote a choice of rank one local system. This $\hom$ space is a smooth manifold modeled on $H^1(L,\Lambda^{2\times})$ by Poincare duality and exponentiation $\exp:\Lambda^{2\times}\rightarrow \Lambda^2$. For a $P_\g$-holomorphic configuration $u:C\rightarrow (E,L)$, let $\hol_\rho(u)$ denote the evaluation of $\rho$ on $[u(\partial C)]$.

For a relative homology class $A\in H_2^\circ (E,L,\bb{Z})$ let 
$$a(A):=\int_\mc{C}a^*(u)$$
where $u:\mc{C}\rightarrow (E,L)$ is a holomorphic representative. Since $a\vert L\equiv 0$ [Lemma \ref{fiberedlagrangianlemma}] and $a$ is closed, it follows $a(A)$ is well-defined as a homotopy invariant.

%
%
%
%
%
%
%

Define the $\bb{Z}_2$-$A_\infty$ maps as

\begin{equation}\label{ainftydefmod2}
\nu^n_{\bb{Z}_2,\rho}(x_1,\dots, x_n):=\sum_{x_0,[u]\in\mc{M}_0(E,L,\mc{P},x_0,x_1,\dots,x_n)} \hol_\rho(u)q^{\pi_*A}r^{a(A)}x_0
\end{equation}

It is shown in [Theorem 6.1 \cite{fiberedpotential}] that a version of \ref{ainftydefmod2} with coefficients in the Novikov ring over $\bb{C}$ satisfies the $A_\infty$-axioms. The proof that 
$$\mc{A}(L)_{\bb{Z}_2}:=\bigg( CF(L,\Lambda^2_{\bb{Z}_2}),\nu_{\bb{Z}_2,\rho}^n\bigg)$$
satisfies these axioms is similar and follows from a description of the boundary of the $1$-dimensional moduli spaces from \ref{compactnessthm}.

\subsection{Invariance}\label{invariancesection}
It is important to see how the definition \ref{ainftydefmod2} respects homotopy of almost complex structure and gauge transformation. This is discussed in \cite{floerfibrations} section 9, and we review the discussion here.

Given two coherent, regular perturbation data $P_1$ and $P_2$ in the rational case \cite{CW2} and their respective $A_\infty$-algebras $A_1$, $A_2$, one defines a perturbation morphism $P_{01}:A_1\rightarrow A_2$ based on a count of quilted Floer configurations which are $P_1$-holomorphic on one side of a ``seam" and $P_2$-holomorphic on the other (see section 3.1 \cite{CW2} for the precise definition of an $A_\infty$ morphism). In order to show that $P_{10}\circ P_{01}$ is $A_\infty$ homotopic to the identity, one defines a moduli of twice-quilted disks and shows that the appropriate count gives rise to an $A_\infty$ homotopy between the two maps.

In our case, the divisor $D$ is not stabilizing for $L$, so in order to related the above objects to invariants in the rational case, we first observe that there is a natural map
\begin{gather*}\mf{f}:\Lambda^2_{\bb{Z}_2}\rightarrow \Lambda_{t,\bb{Z}_2}\\
q^\eta r^\rho\mapsto t^{\eta+\epsilon\rho}
\end{gather*}
where the $\epsilon$ is that in the weak coupling form. Assuming that $(E,L)$ is a rational pair, let $D^1\subset E$ be a stabilizing divisor for $L$ which intersects $\pi^{-1}(D_B)=:D^0$ $\epsilon$-transversely and is $\theta$-approximately holomorphic by [Theorem 8.1 \cite{CM}] and [Theorem 3.6 \cite{CW1}] respectively. Then we define a perturbation morphism between our $A_\infty$-algebra and a more general one by counting quilted Floer configurations that are $\pi$-adapted to $D^0$ below the seam and adapted to $D^1$ at or above the seam, while fixing neighborhoods or the boundary and special points on the domain on the seam component where the almost complex structure is constant and adapted to both $D^0$ and $D^1$. To show that the composition of such a perturbation morphism and a morphism in the reverse direction is $A_\infty$ homotopic to the identity, one adapts the construction of twice-quilted Floer trajectories \cite{CW2} in an analogous way.
\section{A potential formula}\label{examplesection}
We will see in example \ref{realprojectivebundles} that the potential for the total Lagrangian depends on more than the potential for the base and fiber, and requires the knowledge of almost all of the holomorphic representatives in the base. Such a thing is hard to compute outside of some very special cases. We include a family of examples.

Let $\bb{P}^m\rightarrow \bb{P}(\mc{V})\rightarrow B$ be a complex projective symplectic fibration of dimension $2m+2n$, and $L_B\subset B$ a rational Lagrangian such that $\L_B=\t{fix}(\tau_B)\neq \emptyset$ for an anti-symplectic involution. Suppose there is an anti-symplectic involution $\tau$ on $E$ such that
$$ \tau_B\circ \pi=\pi\circ \tau$$
and that $\tau$ is anti-holomorphic on the fibers when we identify $F_p$ and $F_{\tau(p)}$ holomorphically. We get a Lagrangian
$$L_F\rightarrow L\rightarrow L_B$$
and we aim to compute Floer type invariants for $L$ in some very special cases. Via the Arnold-Givetal conjecture \cite{givag}, one expects the Floer cohomology to be non-zero when defined. The conjecture states the following: Let $\phi^t$ with $t\in[0,1]$ be a Hamiltonian isotopy of $(M,\omega)$. Then
\begin{equation}\label{agestimate}\#\phi^1(L)\cap L\geq \sum_i \dim H^i(L,\bb{Z}_2).
\end{equation}
This conjecture is verified in many situations, starting with $(\bb{CP}^n,\bb{RP}^n)$ \cite{givag}\cite{ohrpn} and the case of real forms in monotone Hermitian symmetric spaces \cite{ohagconjecture}. Symplectic quotients were considered in \cite{frauenfelderag}, and the relative case was considered in \cite{iriyehsakaitasaki}.

\subsection{Example: Rank 1 real projective bundles}\label{realprojectivebundles}
We take $\bb{CP}^1\rightarrow \bb{CP}(\mc{O}\oplus \mc{O}_k)\rightarrow \bb{CP}^n$ for $n$ odd and find an anti-symplectic involution.
Let $U\subset \bb{CP}^n$. Recall the definition of the $k^{th}$ twisting sheaf
$$\mc{O}_k(U):=\bigg\lbrace (V,f)\in U\times \mc{A}(U)\vert f(a\cdot \bold{z})=a^k\cdot f(\bold{z})\, \forall a\in V \bigg\rbrace $$
where $\mc{A}(u)$ is the ring of homogeneous polynomials on $U$. For such a space, there is an anti-holomorphic involution
$\tau(V,f)=(\bar{V},\bar{f})$ that lifts the involution on the base, where $\bar{f}$ denotes the map
\begin{gather*}
\bar{\cdot}:\mc{A}(U)\rightarrow \mc{A}(\bar{U})\\
f(\cdot)\mapsto \bar{f}(\bar{\cdot})
\end{gather*}
Such a space can be realized as a toric variety, and hence a symplectic manifold by Delzant's construction. The above involution acts as anti-symplectic with regards to the toric symplectic form, and hence preserves the toric connection $T$ on the fibration. To the toric connection we can apply 6.4.1 from \cite{ms1} to obtain a weak coupling form:
$$\omega_{T,K}:=\varepsilon a_T+\pi^*\omega_{KKS}$$
to which $\tau$ is also anti-symplectic.

The above involution extends to an involution on $\mc{O}\oplus \mc{O}_k$ and hence to $E_{n,k}:=\bb{P}(\mc{O}\oplus\mc{O}_k)$. On $E_{n,k}$ the involution descends to one on $\bb{CP}^n$, so 
$$L:=\t{Fix}(\tau)$$
is an $\bb{RP}^1$ fibration over $\bb{RP}^n$.
The Maslov index $n+1$ disks in $(\bb{CP}^n,\bb{RP}^n)$ are in the class $A$ such that $2A$ is the class of a line. By an argument due to Oh [Lemma 4.6 \cite{ohrpn}] the toric complex structure $J_{\bb{CP}^n}$ is regular for holomorphic disk configurations with boundary in $\bb{RP}^n$. Moreover, the discussion in section 3.1 \cite{agm} and the proof of Theorem 3.6 in \cite{CW1} shows that we can find a smooth stabilizing divisor $D_B$ for $\bb{RP}^n\subset \bb{CP}^n$ which is a complex submanifold with respect to $J_{\bb{CP}^n}$. Theorem \ref{transversalitythm} holds for such a divisor and complex structure on the base.

For a $J_{\bb{CP}^n}$-holomorphic sphere $u$ in the class $2A$, it follows that $u^*E\cong \bb{P}(\mc{O}\oplus \mc{O}_k)$ as a holomorphic-bundle-with-connection. If the pullback connection is to be compatible with the transition maps on this Hirzebruch bundle then the holonomy around the equator must be a rotation by $k/2$. Thus in a smooth trivialization-with-connection $\pi\circ u^*L$ is the set
\begin{equation}
L:=\left\lbrace \Big( [x_0,e^{ki\theta/2}x_1],\theta \Big): x_i\in \bb{R}\right\rbrace \subset \bb{CP}^1\times S^1
\end{equation}
and the holonomy around a closed loop $\gamma$ in $\bb{P}^1$ is rotation by the angle $2\pi k\cdot \mc{A}(\gamma)$ where $\mc{A}(\gamma)$ is the area enclosed by $\gamma$ (assuming the area of the sphere is $1$). 
For $k$ even, there are some easy sections in the given Hirzebruch trivialization:
\begin{gather}
\hat{v}_{[0,1]}(z)=[0,1]\label{constantsectiontop}\\
\hat{v}_{[1,0]}(z)=[1,0]\label{constantsectionbot}\\
\hat{v}_{[x_0,x_1]}(z)=[x_0,z^{k/2}\cdot x_1] \qquad x_0\neq 0 \label{non-constsection}
\end{gather}
Presumably these are the "covariant-constant" sections from Theorem \ref{liftthm}. The case of \ref{non-constsection} shows how the gauge transformation from corollary \ref{heatflowgaugever} need not preserve the curvature of a section in the projectivization.
The vertical Maslov indices are as follows:
\begin{gather}
\mu(\hat{v}_{[0,1]})=-k\\
\mu(\hat{v}_{[1,0]})=k\\
\mu(\hat{v}_{[x_0,x_1]})=k
\end{gather}
which can be obtained by taking a global trivialization over the disk centered at one of the poles of $\bb{CP}^1$ (c.f. Lemma 6.2 \cite{salamonakveld}). Note that the maps \ref{constantsectiontop} and \ref{constantsectionbot} and hence these computations make sense when $k$ is odd.

Next we compute lifts of $v$ that are of expected dimension $0$ using the base a.c.s. $J_{\bb{CP}^n}$, integrable invariant fiber structure and following remark \ref{computeremark}. Let $v_{F_1}:(D,\partial D)\rightarrow (\bb{CP}^1,\bb{RP}^1)$ be a holomorphic map from the disk to a hemisphere in the fiber $F_{v(1)}$. 
In the trivialization from Theorem \ref{liftthm} let $\tilde{v}^m_{[0,1]}$ be the lift of an $m$-fold branched cover of $v$ as in \ref{constantsectionbot}, and let $$\hat{v}^m_{[0,1]}:(D,\partial D)\rightarrow  (v^*E,v^*L)$$ denote the image of this section under inverse trivialization followed by gauge transformation perturbation from Theorem \ref{liftthm}. It follows that the Maslov index of such a disk is
$$\mu(\widehat{v}^{m}_{[0,1]})=m(n+1-k)$$
Denote by 
 $$\widehat{v^{m,l}_p}:C\rightarrow (E,L)$$ the holomorphic configuration that consists of $\hat{v}^m_{[0,1]}$ together with an $l$-fold holomorphic cover the of a hemisphere
$$v_{F_p}^l:(D,\partial D)\rightarrow (\bb{CP}^1,\bb{RP}^1)$$ in the fiber containing $p$. We count these types of configurations that are isolated according to index \ref{indexformulatotalspace}, and which pass through a generic point $p\in L$. As already mentioned, there is a fiber hemisphere map $v_{F_p}$ that evaluates $1$ to $p$, and we count these up to reparameterizations fixing $1$.
Any non-constant holomorphic section of $v^*E$ has vertical maslov index $k$ if $k$ is even. Thus the total Maslov index of such a disk is
$$n+1+k$$
so we would not count this disk on its own.
As discussed in example 3.2 \cite{aurouxspeciallagfibrations}, it is to our benefit to count isolated configurations that contain the exceptional section \ref{constantsectionbot}. Such a section gives a holomorphic disk in the total space $(E,L)$ of Maslov index
$$n+1-k.$$
There are several cases depending on the positive integer values of $k$ and $n$. We don't consider when $k=0$.
\begin{enumerate}
\item[Case $n+1-k=2$] Such a case has expected dimension $0$ but the evaluation is not a generic point.

\item[Case $n+1-k=0$]
We include the configuration that consists of the fiber disk together with the exceptional section $\hat{v}_{[0,1]}^1$, denoted as a parameterization class of $\hat{v}_{[0,1]}^{1,1}$. As transversality holds for multiple covers $\hat{v}_{[0,1]}^m$, the count includes $v_{[0,1]}^{m,1}$ for $m\in \bb{Z}_{\geq 0}$.

\item[Case $n+1-k=1$] We include the fiber disk, and separately the double cover $\widehat{v^{2,0}_{[0,1]}}$.

\item[Case $n+1-k<0$] This case counts non-trivial positive solutions $(m,l)$ to the integer equation
\begin{equation} m \left( n+1-k\right) + 2l= 2
\end{equation}
for when to count the maps $\widehat{v_{[0,1]}^{m,l}}$. 
\end{enumerate}
Let $\mc{I}_{\geq 0}$ denote the set of non-negative integer solutions, and denote by $y_1$ resp. $y_2$ denote the values of a representation $\rho\in \hom (\pi_1(L),\Lambda^{2 \times})$ on the generators of $\pi_1(L)$. If $\mc{D}_L (\rho)$ denotes the number mod $2$ of Maslov index $2$ treed disks through a generic point weighted by their symplectic area and representation $\rho$, then we have
\begin{align}\label{potentialforrank1}
\mc{D}_L &(\rho)=\left(y_1+y_1^{-1}\right)r^{a[v_{F_p}]}\\ +&\sum_{(m,l)\in \mc{I}_{\geq 0}}\left(y_1^ky_2^m-y_1^{-k}y_2^m+y_1^ky_2^{-m}-y_1^{-k}y_2^{-m}\right)q^{m\omega_B[\hat{v}_{[0,1]}]}r^{la[v_{F_p}]+ma[\hat{v}_{[0,1]}]}
\end{align}
In particular, picking the trivial representation gives $0$.

\subsection{Example: Rank $s$ real projective bundles}
In analogy we can consider fixed point sets of anti-holomorphic involutions in the following
$$\bb{CP}^k\rightarrow \bb{P}\Big(\mc{O}\oplus \mc{O}_{k_1}\oplus\dots \mc{O}_{k_s}\Big)\rightarrow \bb{CP}^n$$

Assume that the $k_i\leq k_{i+1}$

From the discussion at the beginning of example \ref{realprojectivebundles} it follows that there is a Lagrangian with respect to a coupling form that fibers as
$$\bb{RP}^s\rightarrow L\rightarrow \bb{RP}^n$$
We adopt the same notation as the previous example, with $H$ the toric connection on $E$, $D_B$ the stabilizing divisor in the base.
Take $J_F$ to be the fiberwise integrable complex structure. In the connection $H$ we have some holomorphic sections
\begin{align}
\hat{v}_{[1,a_1,\dots a_s]}(z):=&[1,z^{k_1/2}a_1,\dots z^{k_s/2}a_s]\label{topsection}\\
\label{bottomsection} \hat{v}_{[0,\dots a_i,\dots a_s]}(z):=&[0,\dots 0,a_{i},\dots a_{j},z^{\frac{k_{j+1}-k_j}{2}}a_{j+1},\dots z^{\frac{k_s-k_i}{2}}a_s], \quad k_i=k_{j},\, (a)_i^j\in \bb{R}^{j-i}\setminus 0
\end{align}
where is it understood that for $k_f$ or $k_f-k_e$ odd we have $a_f=0$. The vertical Maslov indices are
\begin{align*}
\mu(\hat{v}_{[1,a_1,\dots a_s]})=&\sum_{t=1}^s k_t\\
\mu(\hat{v}_{[0,\dots a_i,\dots a_s]})=&\sum_{t=1}^s k_t - (s+1)c \cdot k_i \\
\end{align*}
In addition, there are some other sections incorporating fiber classes; namely let $f^i(z):D\rightarrow \bb{H}$ be the M\"obius transformation  that sends $\partial D$ to $\bb{R}$. Denote $f^i_l(z)=f^i(z^l)$. We consider the following modification to the section \ref{bottomsection}
\begin{equation}\label{bottomsectionmods}
\hat{v}^{\underline{l}}_{[0,\dots a_i,\dots a_s]}(z):=[0,\dots 0,f^i_{l_i}a_i,\dots f^j_{l_j}a_j, z^{k_{j+1}-k_i}f^{j+1}_{l_{i+1}}a_{j+1},\dots,z^{k_s-k_i}f^{s}_{l_s}a_s]
\end{equation}

Such a section has Maslov index 
$$\sum_{t=1}^s k_t-(s+1)k_i + 2\vert\underline{l}\vert$$
With all of this said, the section \ref{bottomsection} and modifications \ref{bottomsectionmods} do not intersect a generic point in $v^*L$, so they only show up in configurations that already involve the fiber class. Summarily we only consider the above sections $\hat{v}$ for which 
$$\mu(\hat{v})\leq -n-1$$
To understand which of these sections are counted we write down parameterizations of the fiber class through a generic point $p=[a_0,\dots a_s]$ with $a_i\neq 0$. One can show  that any holomorphic disk with $f(1)=p$ is of the form
\begin{equation}\label{verticaldisksforranks}
f(z)= [\phi_0 a_0,\phi_1 a_1,\dots \phi_sa_s]
\end{equation}
where each $\phi_i:(D,\partial D)\rightarrow (\bb{C},\bb{R})$ is a finite Blaschke product with $\phi_i(1)=1$
[c.f. Theorem 10.1 \cite{cho}] (and not all $\phi_i=0$ at a given $z$). Such a map has Maslov index
$$(s+1)\sum_{i=1}^s\zeta(\phi_i)$$
where $\zeta$ is the number of Blashke factors.

Taken together, the count will be a subset of integer solutions to the system
\begin{align}\label{indexcountequation}
m\left(\sum_{t=1}^s k_t-(s+1)k_i +n+1\right) + (s+1)\sum_{i=1}^s\zeta(\phi_i)+2\vert\underline{l}\vert=2
\end{align}
for each $i$ in the variables $\underline{l}$, $m$, and $\zeta(\phi_i)$.

\

 \subsection{Computing Floer cohomology of involution fixed point sets in projectivized vector bundles}
Based on the observations in the above examples, we consolidate a general argument that allows us to show that the $A_\infty$-algebra is unobstructed:
\begin{theorem}\label{agthm}
Let $\bb{P}(\mc{V})\rightarrow \bb{CP}^n$ be the projectivization of a vector bundle with a lift $\tau$ of the anti-symplectic involution on $\bb{CP}^n$. Then for $L=\text{Fix}(\tau)$, $HF(L,\Lambda_{t,\bb{Z}_2})$ is defined and isomorphic to $H^{Morse}(L,\Lambda_{t,\bb{Z}_2})$.
\end{theorem}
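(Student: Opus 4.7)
The strategy is to use the anti-symplectic involution $\tau$ together with the gauge-transformation transversality of Theorem \ref{liftthm} to show that every non-constant disk contribution to the $A_\infty$-operations cancels in pairs modulo $2$, reducing the Floer structure to the classical Morse-theoretic one. First I construct $\mathcal{A}(L)$ as in Section~\ref{fukayaalgebrasection} from a coherent regular perturbation datum produced by Theorem \ref{transversalitythm}, chosen to be compatible with $\tau$: the stabilizing divisor $D_B \subset \mathbb{CP}^n$ can be taken $\tau$-invariant (a mild strengthening of the construction in Section~3.1 of \cite{CW1}), the fiberwise integrable complex structure is $\tau$-anti-holomorphic by hypothesis, and the gauge transformations in $\mathfrak{G}^{\infty,\mathrm{reg}}$ are restricted to a $\tau$-equivariant subspace. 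For any treed holomorphic configuration $u$ on a domain $C_\gamma$, the composition $\tau \circ u \circ c_\gamma$ with the orientation-reversing domain involution $c_\gamma$ is then a configuration in the same relative class and of the same index.

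Next I show the curvature vanishes. The operation $\nu^0$ counts isolated Maslov-$2$ treed configurations through a generic point $p \in L$. By Theorem~\ref{liftthm} these are enumerated by $\mathcal{G}^*J_H$-holomorphic sections of pullback bundles over base disks in $(\mathbb{CP}^n,\mathbb{RP}^n)$, with the toric base complex structure being regular by Oh's argument \cite{ohrpn}. For a generic $\tau$-equivariant datum the involution $u\mapsto \tau\circ u\circ c_\gamma$ acts freely on the zero-dimensional moduli space, so configurations pair off into distinct partners with equal symplectic area and holonomy class, and their contributions sum to zero in $\mathbb{Z}_2$. This is the general mechanism behind the cancellation at the trivial local system already visible in Example~\ref{realprojectivebundles}. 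Transporting the vanishing through the ring morphism $\mathfrak{f}:\Lambda^2_{\mathbb{Z}_2}\to \Lambda_{t,\mathbb{Z}_2}$ of Section~\ref{invariancesection} shows that $\mathcal{A}(L)$ is strictly unobstructed, so $HF(L,\Lambda_{t,\mathbb{Z}_2})$ is defined.

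The identical $\tau$-pairing applied to the index-$1$ moduli spaces defining $\nu^1$ shows that every treed trajectory containing at least one non-constant disk cancels with its $\tau$-image in $\mathbb{Z}_2$. What remains are the trajectories whose disk vertices all carry the trivial class, which compute the Morse differential of $f$ on $L$. Hence $\nu^1 = d_{\mathrm{Morse}}$ on the chain level, giving $HF(L,\Lambda_{t,\mathbb{Z}_2}) \cong H^{\mathrm{Morse}}(L,\Lambda_{t,\mathbb{Z}_2})$.

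The main obstacle is establishing the freeness of the $\tau$-action on the relevant moduli spaces, equivalently the absence of non-constant $\tau$-invariant (``real'') configurations through a generic point. For generic $\tau$-equivariant perturbation data such fixed configurations form a subset of positive codimension in an already zero-dimensional moduli space and hence are empty, but making this rigorous requires an equivariant variant of the Sard--Smale argument in the proof of Theorem~\ref{liftthm}: one must verify that the $\tau$-equivariant subgroup of $\mathfrak{G}^{\infty}$ still supplies enough variations to make the linearized Cauchy--Riemann operator surjective on sections whose image avoids the $\tau$-fixed locus of the fiber $\mathbb{P}^m$. Once this equivariant genericity is in place, the pairing argument above is clean and yields the theorem.
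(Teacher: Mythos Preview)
Your overall strategy—pair configurations by $u\mapsto \tau\circ u\circ c_\gamma$ and cancel mod $2$—is the same as the paper's, but you introduce machinery the paper avoids. You restrict to $\tau$-equivariant gauge transformations and a $\tau$-invariant divisor, and then flag an equivariant Sard--Smale argument as the main obstacle. The paper needs none of this. Its key observation is that $\tau^*a=-a$ forces $\tau$ to preserve the horizontal distribution $TF^{\perp a}$, and then the \emph{uniqueness} clause of Lemma~\ref{uniqueacs} pins down $\tau^*J_H=-J_H$ globally: the unperturbed almost complex structure is already $\tau$-anti-holomorphic. Combined with Lemma~\ref{hologaugelemma}, which says a gauge transformation $\mathcal{G}$ is a biholomorphism $(u^*E,J_H)\to(u^*E,\mathcal{G}^*J_H)$, the moduli of $\mathcal{G}^*J_H$-sections is literally $\mathcal{G}$ applied to the moduli of $J_H$-sections. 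So a \emph{generic} (non-equivariant) $\mathcal{G}$ suffices for regularity, while the pairing is carried out at the level of $J_H$-holomorphic sections, where $\tau$ acts. No equivariant transversality is required.

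Two further points. First, you only assert that the fiberwise structure is $\tau$-anti-holomorphic ``by hypothesis''; you never explain why $\tau$ is anti-holomorphic for the full $J_H$ on the total space. The paper's uniqueness argument via Lemma~\ref{uniqueacs} is exactly what closes this gap, and you should incorporate it. Second, the freeness of the $\tau$-action that you identify as the main obstacle is not addressed in the paper's proof either; in the worked examples it is visible because the paired sections carry distinct boundary holonomies (e.g.\ $y_1^{k}$ versus $y_1^{-k}$ in \eqref{potentialforrank1}), so the pairs are genuinely distinct at the trivial representation. Your instinct that this deserves an argument is sound, but the resolution is not an equivariant perturbation scheme—it is the observation above that one can work with the unperturbed $J_H$ throughout.
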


\begin{proof}
Most of the proof is demonstrated through example \ref{realprojectivebundles}, but we provide it here. For the base, all holomorphic disks come in a pairs in the integrable toric structure $J_{\bb{CP}^n}$, which is regular by a doubling trick followed by a topological argument [Lemma 4.6 \cite{ohrpn}]. By section 3.1 in \cite{agm} and by the proof of [Theorem 3.6 in \cite{CW1}], we can find a high degree smooth hypersurface $D_B\subset\bb{CP}^n$ which is holomorphic in the toric structure and is stabilizing for $\bb{RP}^n$. We apply Theorem \ref{liftthm} using this base complex structure and stabilizing divisor to find a comeager subset of regular gauge transformations for each section class. By Remark \ref{computeremark} is suffices to find sections in the given connection, and then perturb using a regular (invertible) gauge transformation.

Given a holomorphic configuration $u$ in the perturbation system above, the involution $\tau$ gives another configuration $u(z)\mapsto \tau\circ u(\bar{z})=:v(z)$ with matching boundary conditions. To see that $v$ is holomorphic it suffices to see that $\tau$ is anti-holomorphic. Note that in a local trivialization the almost complex structure
$$J:=\begin{bmatrix}
J_{\bb{P}(V)} & J_{\bb{P}(V)}\circ X_\sigma-X_\sigma\circ J_{\bb{CP}^n} \\
0 & J_{\bb{CP}^n}
\end{bmatrix}
$$
is the unique such one from Lemma \ref{uniqueacs} when the base is $\bb{CP}^n$. We have $\tau^*a=-a$, so $\tau$ sends $TF^{\perp a}$ to $TF^{\perp a}$. Thus $\tau^* J$ is the unique almost complex structure that agrees with $-J_{\bb{P}(V)}$ on the fibers, preserves $TF^{\perp a}$ and makes $\pi$ holomorphic w.r.t. $-J_{\bb{CP}^n}$, so it must be equal to $-J$ in the above coordinates, and hence globally.

Thus, the full potential vanishes at the trivial representation, so that the definition of Floer cohomology of $L$ is unobstructed. Moreover, any holomorphic disk in the Floer differential cancels mod 2 for exactly the same reason. Hence
$$HF(L,\Lambda_{\bb{Z}_2,t},J,\mc{G})\cong H^{Morse}(L,\Lambda_{\bb{Z}_2,t}^2)$$
From the discussion in the invariance section \ref{invariancesection} we have an isomorphism for a generic perturbation system. Since $\text{rank}_{\Lambda_{\bb{Z}_2,t}}HF(L,\Lambda_{\bb{Z}_2}^2)\leq \text{Fix}(\phi)$ for a Hamiltonian isotopy $\phi$, estimate \ref{agestimate} follows.
\end{proof}

\printbibliography

\end{document}